\def\B{\mathbb B}
\def\D{\mathbb D}
\def\C{\mathbb C}
\def\R{\mathbb R}
\def\N{\mathbb N}
\def\bcases{\begin{cases}}
\newcommand{\br}[1]{\left(#1\right)}
\def\ecases{\end{cases}}
\newcommand{\set}[1]{\left\{ #1\right\}}
\newtheorem{thm}{Theorem}
\newtheorem{prop}[thm]{Proposition}
\newtheorem{cor}[thm]{Corollary}
\newtheorem{lem}[thm]{Lemma}
\newtheorem{rem}[thm]{Remark}
\renewcommand{\Im}{\operatorname{\rm{Im}}}
\renewcommand{\Re}{\operatorname{\rm{Re}}}
\newcommand{\eps}{\varepsilon}
\renewcommand{\k}{\kappa}
\newenvironment{proof*}{\vskip 2mm\noindent {}}{\hfill $\Box$ \vskip 2mm}
\title{Quasi triangle inequality for the Lempert function}
\author{Nikolai Nikolov}
\address{N. Nikolov\\
Institute of Mathematics and Informatics\\
Bulgarian Academy of Sciences\\
Acad. G. Bonchev 8, 1113 Sofia, Bulgaria
\vspace{1mm}
\newline Faculty of Information Sciences\\
State University of Library Studies and Information Technologies\\
Shipchenski prohod 69A, 1574 Sofia,
Bulgaria}
\email{nik@math.bas.bg}
\author{Pascal J. Thomas}
\address{P.J. Thomas\\
Institut de Math\'ematiques de Toulouse; UMR5219 \\
Universit\'e de Toulouse; CNRS \\
UPS, F-31062 Toulouse Cedex 9, France}
\email{pascal.thomas@math.univ-toulouse.fr}
\thanks{The first named author was partially supported by the Bulgarian National
Science Fund, Ministry of Education and Science of Bulgaria under contract KP-06-N82/6. The
second named author wishes to thank the Institute of Mathematics and Informatics of the Bulgarian Academy
of Sciences for its hospitality during the time when most of this work was carried out.}
\subjclass[2010]{32F45}
\begin{document}

\keywords{Lempert function, Kobayashi distance, pseudoconvexity, quasi triangle inequality}

\begin{abstract}
The (unbounded version of the) Lempert function $l_D$ on a domain $D\subset \C^d$
does not usually satisfy the triangle inequality, but on
bounded $\mathcal C^2$-smooth strictly pseudoconvex domains, it satisfies a quasi triangle inequality:
$l_D(a,c)\le C( l_D(a,b)+l_D(b,c))$. We show that pseudoconvexity is necessary for this property
as soon as $D$ has a $\mathcal C^1$-smooth boundary. 
We also give  estimates of the Lempert function and of other invariants in some
domains which are models for local situations, 
and derive some general local bounds
depending on the regularity of the boundary of a domain.
\end{abstract}

\maketitle

\section{Introduction}
\subsection{Invariant functions and triangle inequalities.}

The most commonly used invariant Finsler metric on the tangent space of a complex manifold $M$
is the Kobayashi-Royden metric, defined as follows using ``analytic discs'', i.e.
holomorphic maps from the unit disc $\D$ to $M$:
$$
\kappa_M(z;X):=\inf\{|\alpha| :\exists\varphi\in\mathcal O(\D,M), \varphi(0)=z, \alpha\varphi'(0)=X\}.
$$
It can be seen \cite{NP} as the infinitesimal version of the Lempert function,
$l_M(z,w):=\tanh^{-1}\ell_M(z,w)$, where
$$
\ell_M(z,w):=\inf\{|\alpha|:\exists\varphi\in\mathcal O(\D,M)
\hbox{ with }\varphi(0)=z,\varphi(\alpha)=w\}.
$$
Lempert \cite{L1}, \cite{L2} proved that $l_M$ (and thus $\ell_M$) satisfies the triangle inequality and defines a distance
when $M$ is a bounded convex domain in $\C^n$. This  extends to bounded $\mathcal C^2$-smooth $\C$-convex domains,
as a consequence of the theorem by Jacquet
\cite{Jq} that they can be exhausted by strongly linearly convex domains with $\mathcal C^\infty$ boundaries,
see also \cite[Remark 7.1.21 (b)]{JP}.

In general, as pointed out in \cite{L1}, $l_M$ does not satisfy the triangle inequality and
the largest pseudodistance not exceeding $l_M$ is $k_M:=\inf_{m\in\N}l_M^{(m)}= \lim_{m\to\infty}l_M^{(m)}$, where
\[
l_M^{(m)}(z,w) := \inf\{\sum_{j=0}^{m-1} l_M(y_j,y_{j+1}) , y_j \in M, y_0=z, y_m=w \} .
\]
We sometimes denote $l_M^{(\infty)}:=k_M$. 
We may define $\ell_M^{(m)}$ by substituting $\ell_M$
for $l_M$ in the above definition.
Note that $k_M \ge \inf_{m\in\N}\ell_M^{(m)}\ge \tanh k_M$ 
and those three functions  are pseudodistances.

The quantity $k_M$ is called the Kobayashi (pseudo)distance.
It turns out that $k_M$ is the integrated form of $\kappa_M,$ i.e.
\begin{equation}
\label{kobadef}
k_M(z,w)=\inf_{\gamma}\int_0^1\kappa_M(\gamma(t);\gamma'(t))dt,
\end{equation}
where the infimum is taken over all absolutely continuous curves
$\gamma:[0,1]\to M$ such that $\gamma(0)=z$ and $\gamma(1)=w.$
Good general references about those topics are the monographs \cite{JP} and \cite{Ko}.

From now on, we restrict attention to the case where $M$ is a domain $D$ in $\C^n.$
We say that a non-negative symmetric function $L$ on $D\times D$ satisfies a \emph{Quasi Triangle Inequality}
(QTI) if there exists $C\ge 1$ such that for any $x, y , z \in D$,
\begin{equation}
\label{wti}
L (x,z) \le C \left( L (x,y) + L (y,z) \right).
\end{equation}
Notice that when $l_D$ satisfies a QTI (resp. the triangle inequality), then $\ell_D$
satisfies the corresponding inequality.

There are infinitesimal analogues for this.
We say that a Finsler pseudometric $F$
satisfies the triangle inequality when $F(z; X+Y) \le F(z;X)+F(z;Y)$
for any  $z\in M$ and $X,Y\in T_z M$,
and a \emph{quasi triangle inequality} if there exists
$C>0$ such that 
\[
F(z; X+Y) \le C(F(z;X)+F(z;Y))
\]
for any  $z\in M$ and $X,Y\in T_z M$.

In general, the Kobayashi-Royden infinitesimal pseudometric $\kappa_M$
does not satisfy the triangle inequality; the Kobayashi-Buseman
 infinitesimal pseudometric, given by
 $\hat \kappa_M (z;X):= \inf \kappa^{(m)}_M (z;X)$, where
\[
\kappa^{(m)}_M (z;X):= \inf\left\{ \sum_{j=1}^m \kappa_M (z;X_j): 
X= \sum_{j=1}^m X_j \right\}
\]
was devised to supplement this. It turns out that 
$k_M$ is also the integrated form of $\hat \kappa_M$.

\medskip
Unlike the actual triangle inequality, a QTI is satisfied by $\ell_D$ for large classes of domains.

\begin{prop}
\label{suff}
If $D \subset \C^n$ is a bounded $\mathcal C^2$-smooth strictly pseudoconvex domain, then $l_D$,
and therefore $\ell_D$, satisfy a QTI.
\end{prop}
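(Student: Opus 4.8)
The plan is to prove the stronger comparison that the Lempert function is dominated by a fixed multiple of the Kobayashi distance: there is $C\ge 1$ with
$$k_D(z,w)\le l_D(z,w)\le C\,k_D(z,w)\qquad\text{for all }z,w\in D.$$
The left inequality holds on any domain, since $k_D=\inf_m l_D^{(m)}\le l_D$. Granting the right one, the QTI follows at once from the fact that $k_D$ is a genuine distance:
$$l_D(x,z)\le C\,k_D(x,z)\le C\big(k_D(x,y)+k_D(y,z)\big)\le C\big(l_D(x,y)+l_D(y,z)\big),$$
the inequality for $\ell_D$ being a consequence as noted after \eqref{wti}. So the whole task is the uniform bound $l_D\le C\,k_D$, which I would split according to the location of the pair $(z,w)$.

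On a fixed compact set $K\Subset D$ the bound is elementary. Both $l_D$ and $k_D$ are continuous on $K\times K$, vanish exactly on the diagonal, and near the diagonal are comparable to $|z-w|$, because the infinitesimal metrics $\kappa_D$ and $\hat\kappa_D$ are each caught between constant multiples of the Euclidean norm, uniformly for the base point in $K$ (for $\hat\kappa_D$ this uses that the convexification is monotone and fixes the Euclidean norm). Off the diagonal both are bounded below by a positive constant, so the ratio $l_D/k_D$ is bounded on $K\times K$, and only pairs with a point near $\partial D$ remain.

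Near $\partial D$ I would localize and use strict pseudoconvexity. Cover $\partial D$ by finitely many sets $U_1,\dots,U_N$ carrying biholomorphisms $\Phi_j$ with $\Phi_j(D\cap U_j)$ strictly convex; such charts exist because a quadratic holomorphic change of coordinates absorbs the holomorphic part of the real Hessian of a strictly plurisubharmonic defining function, leaving a strictly convex model. On each model Lempert's theorem gives $l_{D\cap U_j}=k_{D\cap U_j}$. For points lying close together near $\partial D$, hence in a common chart, the localization principle for strictly pseudoconvex domains gives $k_D\asymp k_{D\cap U_j}$ and $l_D\asymp l_{D\cap U_j}$ with constants independent of the points, so $l_D\le C\,l_{D\cap U_j}=C\,k_{D\cap U_j}\le C'\,k_D$. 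For the remaining pairs the two points are a fixed Euclidean distance apart, whence $k_D(z,w)$ is bounded below by a positive constant; there the Balogh--Bonk type boundary estimates (valid for both $l_D$ and $k_D$ on strictly pseudoconvex domains) yield $|l_D-k_D|\le C$, which on a bounded-below regime upgrades to the multiplicative bound. Collecting the regimes gives $l_D\le C\,k_D$ throughout $D\times D$.

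The crux, and the place I expect to fight, is the localization of the Lempert function. Unlike $k_D$, the value $l_D(z,w)$ comes from a single analytic disc, so one must show that a near-extremal disc joining two points close to $\partial D$ can be replaced by one staying inside the chart $U_j$ at the cost of only a bounded factor. This is exactly where strict pseudoconvexity is used, through bounded strictly plurisubharmonic peak/exhaustion functions and the Forstneri\v{c}--Rosay localization; the companion point is to patch the finitely many local convex models into one multiplicative constant that remains uniform up to the boundary.
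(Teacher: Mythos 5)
Your overall strategy --- establish the uniform comparison $k_D\le l_D\le C\,k_D$ and then inherit the triangle inequality from $k_D$ --- is exactly the route the paper takes when it derives Proposition~\ref{suff} from the main theorem of \cite{N} ($k_D\le l_D\le A\,c_D\le A\,k_D$ on bounded $\mathcal C^2$ strictly pseudoconvex domains). It is \emph{not} the route of the paper's self-contained argument, which is shorter and shallower: assume the QTI fails along a sequence of triples, use \eqref{trifail} and boundedness of $D$ to force all three points to converge to a single boundary point $p$, localize once around $p$ to a chart $U$ with $D\cap U$ biholomorphic to a convex domain, and play the two inequalities $\ell_D\le\ell_{D\cap U}$ (monotonicity) and $\ell_{D\cap U}\le c^{-1}\ell_D$ on $D\cap V$ (Lemma~\ref{royfin}, with $c=\ell_D(D\cap V,D\setminus U)>0$) against Lempert's triangle inequality for $\ell_{D\cap U}$. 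That argument needs no Kobayashi distance, no boundary asymptotics, and no uniformity of constants over a cover of $\partial D$; the price is that it only proves the QTI for $\ell_D$ directly, the statement for $l_D$ still resting on \cite{N}.

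Measured against what you would actually have to prove, your sketch has two soft spots, both sitting precisely where the depth of \cite{N} (or of Balogh--Bonk) lives. First, in the ``close together'' regime the inequality you need is not a localization of the Lempert function at all: the direction $l_D\le l_{D\cap U_j}$ is free by monotonicity, so the crux you identify (replacing a near-extremal disc by one staying in the chart) is the wrong crux. What your chain $l_D\le l_{D\cap U_j}=k_{D\cap U_j}\le C'k_D$ really requires is the \emph{multiplicative} localization $k_{D\cap U_j}\le C'k_D$ on $D\cap V_j$, uniformly for pairs that may have $k_D(z,w)$ arbitrarily small or arbitrarily large; the Forstneri\v{c}--Rosay-type statements are infinitesimal and additive, and integrating them requires controlling where near-geodesics of $k_D$ travel, which is again boundary-estimate territory. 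Second, in the ``far apart'' regime you invoke $|l_D-k_D|\le C$, which is essentially the theorem of \cite{N} itself; once you grant that, the entire decomposition is unnecessary. So the proposal is a viable outline of the known comparison $l_D\le C\,k_D$, but as written it asserts rather than proves the two estimates that carry all the difficulty, whereas the paper's elementary proof deliberately avoids them by a compactness/contradiction argument reduced to a single chart.
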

This is a corollary of the main result in \cite{N}: in a bounded $\mathcal C^2$-smooth strictly pseudoconvex domain,
there is a constant $A \ge 1$ such that $k_D \le l_D \le A c_D \le A k_D$, where $c_D$ stands for the
Carath\'eodory distance. It is however possible to prove the conclusion of Proposition \ref{suff} for $\ell_D$ in a more elementary way, using
a natural localization lemma for the Lempert function which will be useful in the rest of the paper (Lemma \ref{royfin}),
and we do so in Section \ref{suffcond}.

There are many examples of domains where $l_D$ satisfies a QTI without satisfying the triangle inequality.
For instance, let $D$ be a pseudoconvex balanced domain. It is known that the
equality $l_D(0,\cdot)=k_D(0,\cdot)$ is equivalent to the convexity of $D$ (see e.g. \cite[Proposition 4.3.10 (b)]{JP}).
Take now $D$ to be non-convex. If $l_D$ verified the triangle inequality,
then it would be equal to $k_D$ everywhere. So one may find $\eps>0$ and $x_1,x_2,x_3\in D$
such that $l_D(x_1,x_2)+l_D(x_2,x_3)<l_D(x_1,x_3)-\eps.$ Since $D$ is pseudoconvex, we may exhaust it by strictly
pseudoconvex domains $(D_j)_{j\ge 1}.$ Then $l_{D_j}(x_k,x_m)\to l_D(x_k,x_m),$ $1\le k<m\le 3,$ (see e.g.
\cite[Proposition 3.3.5 (a)]{JP}) and hence $l_{D_j}(x_1,x_2)+l_{D_j}(x_2,x_3)<l_{D_j}(x_1,x_3)$ for
any $j$ large enough. On the other hand, as we have already claimed, $D_j$ satisfies a QTI.

\subsection{Main result.}

\begin{thm}
\label{pcnc}
If $D \subset \C^n$ is a bounded $\mathcal C^1$-smooth domain, and $\ell_D$ or $\kappa_D$ satisfies a QTI,
then $D$ is pseudoconvex.
\end{thm}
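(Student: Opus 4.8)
The plan is to prove the contrapositive: assuming $D$ is $\mathcal C^1$-smooth but not pseudoconvex, I will show that $\kappa_D$ fails the QTI, and that this forces $\ell_D$ to fail it as well. The first step is the reduction to $\kappa_D$: if $\ell_D$ obeys \eqref{wti} with constant $C$, then applying \eqref{wti} to the triple $(z,\,z+tX,\,z+t(X+Y))$, dividing by $t$ and letting $t\to0^+$ (using $\ell_D(z,z+tW)/t\to\kappa_D(z;W)$ and continuity in the base point) gives $\kappa_D(z;X+Y)\le C\bigl(\kappa_D(z;X)+\kappa_D(z;Y)\bigr)$. So it suffices to refute the infinitesimal QTI. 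Since pseudoconvexity is a local property of the boundary, there is $p_0\in\partial D$ at which $D$ is not locally pseudoconvex; using $\mathcal C^1$-smoothness I normalize coordinates so that $p_0=0$, the inner normal is $+\Re z_1$, and $D=\{\Re z_1>\psi(\Im z_1,z')\}$ near $0$ with $\psi\in\mathcal C^1$, $\psi(0)=0$, $d\psi(0)=0$.

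The mechanism is that the indicatrix $\{X:\kappa_D(z;X)\le1\}$ becomes strongly non-convex as $z\to p_0$ — it develops a deep ``dent'' along the normal. I take base points $z_\eps\in D$ on the inner normal with $d_D(z_\eps)\sim\eps$. Local non-pseudoconvexity, read off through the failure of the Kontinuit\"atssatz, supplies a complex tangential direction $e_2$ along which $\partial D$ curves away from $D$, and this concavity leaves room to push analytic discs. I will produce two discs through $z_\eps$ whose velocities $X_\eps^{\pm}$ have order-one, opposite-signed $e_2$-components and a common inward-normal component of size $\sim\eps^{1-1/\gamma}$, so that $\kappa_D(z_\eps;X_\eps^{\pm})=O(1)$ while $X_\eps^++X_\eps^-$ is purely normal of size $\sim\eps^{1-1/\gamma}$. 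Here $\gamma>1$ is the order of contact along the concave direction allowed by the $\mathcal C^1$ hypothesis. In the model $G_\gamma=\{\Re z_1+|z_2|^\gamma>0\}$ these are the explicit discs $\lambda\mapsto(\eps+\eps^{1-1/\gamma}\lambda,\,\pm\lambda)$, which are admissible because $\eps-\eps^{1-1/\gamma}r+r^\gamma>0$ on $[0,1]$; the general case is reduced to $G_\gamma$ by the localization Lemma \ref{royfin}.

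Two matching estimates then finish the proof. The upper bound $\kappa_D(z_\eps;X_\eps^\pm)=O(1)$ comes from the tilted discs together with Lemma \ref{royfin}. The lower bound $\kappa_D(z_\eps;X_\eps^++X_\eps^-)\to\infty$ is the decisive one. A disc $\varphi$ realizing a purely normal velocity has $\varphi_2(0)=\varphi_2'(0)=0$, so $|\varphi_2(\lambda)|\lesssim|\lambda|^2$ on $\D$ by the Schwarz lemma; hence on each subdisc $|\lambda|<\rho$ one has $\Re\varphi_1\gtrsim-|\varphi_2|^\gamma\gtrsim-\rho^{2\gamma}$, i.e. $\varphi_1$ maps $\{|\lambda|<\rho\}$ into a half-plane whose boundary lies at distance $\sim\eps+\rho^{2\gamma}$ from $\varphi_1(0)$. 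Schwarz--Pick on that subdisc gives $|\varphi_1'(0)|\lesssim(\eps+\rho^{2\gamma})/\rho$, and optimizing in $\rho\sim\eps^{1/(2\gamma)}$ yields $|\varphi_1'(0)|\lesssim\eps^{\,1-1/(2\gamma)}$, so $\kappa_D(z_\eps;(1,0))\gtrsim\eps^{-(1-1/(2\gamma))}$. Consequently $\kappa_D(z_\eps;X_\eps^++X_\eps^-)\sim\eps^{1-1/\gamma}\,\kappa_D(z_\eps;(1,0))\gtrsim\eps^{-1/(2\gamma)}\to\infty$, while $\kappa_D(z_\eps;X_\eps^+)+\kappa_D(z_\eps;X_\eps^-)=O(1)$; the ratio blows up, contradicting the QTI for $\kappa_D$ and hence for $\ell_D$.

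The main obstacle is to supply, from nothing more than $\mathcal C^1$-smoothness and the failure of pseudoconvexity, the quantitative concave structure used above: with no Levi form available, both the existence of the cheap tilted discs and a usable order of contact $\gamma>1$ for the normal lower bound must be extracted through the continuity principle, and this has to be carried out uniformly so that the localization of Lemma \ref{royfin} faithfully transports both estimates from $G_\gamma$ to $D$. Controlling the interplay — ensuring the concavity is simultaneously strong enough to make the tilted directions cost $O(1)$ and to force the normal metric to grow faster than $\eps^{-1/2}$ — is where the real work lies, and the dependence of the rate $\eps^{-1/(2\gamma)}$ on $\gamma$ is precisely the place where the boundary regularity enters.
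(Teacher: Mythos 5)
Your mechanism is the same as the paper's: near a non\nobreakdash-pseudoconvex boundary point, compare a cheap two\nobreakdash-term decomposition of the inner normal vector (cost $\asymp\eps^{-1/2}$, realized by tilted analytic discs living in a concave model contained in $D$) against a lower bound for $\kappa_D(p_\eps;(1,0))$ that is strictly larger, so the QTI constant must blow up. However, the two load\nobreakdash-bearing inputs are respectively missing and incorrect as stated.

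First, the concave model inside $D$. You assert that the failure of the Kontinuit\"atssatz ``supplies a complex tangential direction along which $\partial D$ curves away from $D$'' with some order of contact $\gamma>1$, and you yourself flag the extraction of this quantitative structure as ``where the real work lies'' --- but you never carry it out. This is exactly the content of the paper's Lemma \ref{betterHo}: one first reduces to a two\nobreakdash-dimensional slice (Hitotumatu \cite{Hi}), then applies H\"ormander's Theorem 4.1.25, which converts non\nobreakdash-pseudoconvexity into the existence of $p\in\partial D$ and coordinates with $D\supset \D^2\cap\{\Re z_1+q<0\}$ for a quadratic $q$ with $\frac{\partial^2 q}{\partial z_2\partial\bar z_2}<0$; a polynomial change of variables then gives $G_2\subset D$. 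In particular the order of contact is always exactly $2$ --- there is no parameter $\gamma$ to be negotiated --- and without this step your discs $\lambda\mapsto(\eps+\eps^{1/2}\lambda,\pm\lambda)$ have no domain to live in.

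Second, your lower bound is wrong under the theorem's hypothesis. The step $\Re\varphi_1\gtrsim-|\varphi_2|^\gamma$ presupposes the containment $D\subset\{\Re z_1>-C(|\Im z_1|^\gamma+\|z'\|^\gamma)\}$ near $p$ for some fixed $\gamma>1$, which is a $\mathcal C^{1,\gamma-1}$\nobreakdash-type condition from one side; a merely $\mathcal C^1$ graph with vanishing differential satisfies only $|\psi(x)|\le x\psi_1(x)$ with $\psi_1\to0$, and need not be $O(x^\gamma)$ for any $\gamma>1$. So the rate $\eps^{-(1-1/(2\gamma))}$ is unavailable, and as written your argument proves the theorem only for $\mathcal C^{1,\alpha}$ boundaries (this is the paper's Proposition \ref{onegamma}, not Proposition \ref{C1}). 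The correct $\mathcal C^1$ argument chooses the radius $\beta_\eps$ by $\psi(C\beta_\eps^2)=\eps$, so that $\beta_\eps=\eps^{1/2}F(\eps)$ with $F(\eps)\to\infty$, and concludes $\kappa_D(p_\eps;(1,0))\gtrsim\eps^{-1/2}F(\eps)$: no power gain, but still $\gg\eps^{-1/2}$, which is all that is needed against the upper bound. (Also, $\varphi_1$ does not map into a half\nobreakdash-plane because of the $\Im z_1$ contribution; the paper composes with $\xi\mapsto-(-\xi+\eps)^{1/2}$ to handle this.) Finally, your reduction of the $\ell_D$ case to the $\kappa_D$ case requires $\limsup_{t\to0}\ell_D(z+tX,z+tX+tY)/t\le\kappa_D(z;Y)$, a locally uniform version of the derivative formula of \cite{NP} that you should justify; the paper sidesteps this by running the Lempert\nobreakdash-function and infinitesimal estimates in parallel with the same discs.
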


The scheme of the proof of Theorem \ref{pcnc} is as follows: if the domain is not pseudoconvex, we can find a point $p\in \partial D$
which is, in a certain sense, strictly pseudoconcave.  Near such a point we can find pairs of points $z,w \in D$
such that $\ell_D^{(2)}(z,w)  \ll\ell_D(z,w)$ as $z,w \to p$. This is done in Section \ref{neccond} by comparing
upper estimates for $\ell_D^{(2)}$ near a pseudoconcave point (in
Proposition \ref{upbd}, which does not depend on the smoothness
of $\partial D$) and lower estimates for $l_D$ near any $p$, 
when $\partial D$ is merely
$\mathcal C^{1}$-smooth (Proposition \ref{C1}).

To obtain the lower estimates, we study some model domains in $\C^2$. We thought it interesting to study more
completely the growth of the Lempert function along the normal to the boundary in those domains,
and this is done in Section \ref{model}. In Section \ref{more}, 
we study estimates for another invariant, the Sibony metric, and
derive some further consequences of the estimates for model domains.

When the Lempert function $\ell_D$ satisfies the triangle inequality,
it is clearly continuous (and thus $l_D$ as well). The following  shows that this is still the case when only a QTI is satisfied, and that it extends
to some other invariants.

\begin{cor}
\label{corcont}
If $D \subset \C^n$ is a bounded $\mathcal C^1$-smooth  domain, and $\ell_D$ (resp. $\k_D$) satisfies a QTI,
then for all $m \in\N$, $\ell_D^{(m)}$ and $l_D^{(m)}$ 
(resp. $\kappa_D^{(m)}$ and $\hat\kappa_D$) are continuous functions.
\end{cor}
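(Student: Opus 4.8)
The plan is to deduce everything from Theorem~\ref{pcnc} together with the potential-theoretic regularity that pseudoconvexity plus $\mathcal C^1$-smoothness forces. The key observation is that the QTI hypothesis enters \emph{only} through Theorem~\ref{pcnc}: if $\ell_D$ (resp. $\kappa_D$) satisfies a QTI, then $D$ is pseudoconvex, and a bounded pseudoconvex domain with $\mathcal C^1$ (in particular Lipschitz) boundary is hyperconvex (Kerzman--Rosay, Demailly; see \cite{JP}). From hyperconvexity I would extract the two facts that drive the whole proof: $D$ is taut, so that the base invariants $\ell_D$, $l_D$ and $\kappa_D$ are \emph{continuous} on $D\times D$ (resp. on the tangent bundle); and $D$ is $k_D$-complete, so that closed $k_D$-balls are compact subsets of $D$. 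I also record the standard fact that $l_D$ is finite-valued, i.e.\ $\ell_D<1$ everywhere on $D\times D$.

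For the iterated Lempert functions I would prove continuity by sandwiching upper and lower semicontinuity. Upper semicontinuity is soft: for a fixed choice of intermediate points $\mathbf y=(y_1,\dots,y_{m-1})$ the function $(x,z)\mapsto \ell_D(x,y_1)+\dots+\ell_D(y_{m-1},z)$ is upper semicontinuous, since only the two end terms depend on $(x,z)$ and $\ell_D$ is jointly u.s.c.; as $\ell_D^{(m)}$ is the pointwise infimum of this family, it is u.s.c., and likewise for $l_D^{(m)}$.

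The lower semicontinuity is the heart of the matter, and this is exactly where completeness is used. Given $(x_n,z_n)\to(x,z)$, I would pick chains $x_n=y_0^n,\dots,y_m^n=z_n$ realizing $\ell_D^{(m)}(x_n,z_n)$ to within $1/n$. Since $\ell_D^{(m)}\le\ell_D<1$, each single edge satisfies $\ell_D(y_j^n,y_{j+1}^n)\le \ell_D^{(m)}(x_n,z_n)+1/n$, which converges to a value $\le\ell_D(x,z)<1$ and so stays bounded away from $1$; hence every edge has uniformly bounded $l_D$-length, and therefore $k_D(x,y_j^n)\le k_D(x,x_n)+\sum_{i<j}l_D(y_i^n,y_{i+1}^n)$ is bounded uniformly in $n$ and $j$. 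By $k_D$-completeness all the $y_j^n$ lie in a fixed compact subset of $D$, so after passing to a subsequence realizing $\liminf_n\ell_D^{(m)}(x_n,z_n)$ we may assume $y_j^n\to y_j^\ast\in D$ with $y_0^\ast=x$, $y_m^\ast=z$. Continuity of $\ell_D$ then yields $\ell_D^{(m)}(x,z)\le\sum_j\ell_D(y_j^\ast,y_{j+1}^\ast)=\lim_n\sum_j\ell_D(y_j^n,y_{j+1}^n)\le\liminf_n\ell_D^{(m)}(x_n,z_n)$, the desired lower semicontinuity. The argument for $l_D^{(m)}$ is identical, using instead that the $l_D$-length of the chains is $\le l_D^{(m)}(x_n,z_n)+1/n\to l_D^{(m)}(x,z)<\infty$.

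For the infinitesimal invariants I would run the same scheme on decompositions rather than chains. Fixing a base point and letting $(z_n,X_n)\to(z_0,X_0)$, near-optimal decompositions $X_n=\sum_j X_j^n$ have $\kappa_D(z_n;X_j^n)$ bounded by $\kappa_D^{(m)}(z_n;X_n)+1/n$; since $\kappa_D\ge c\,|\cdot|$ near $z_0$ for the bounded domain $D$, the vectors $X_j^n$ are bounded and a subsequence converges to a decomposition $X_0=\sum_j X_j^\ast$. Upper semicontinuity of $\kappa_D^{(m)}$ follows as before, by distributing $X-X_0$ evenly among the pieces of a near-optimal decomposition of $X_0$, and lower semicontinuity from continuity of $\kappa_D$ at the limiting decomposition; thus each $\kappa_D^{(m)}$ is continuous. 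For $\hat\kappa_D$ I would invoke that the infimum defining it is attained with a bounded number of vectors---by Carath\'eodory's theorem one may take $m_0=2n$, so that $\hat\kappa_D=\kappa_D^{(m_0)}$ (see \cite{JP})---whence $\hat\kappa_D$ inherits continuity from $\kappa_D^{(m_0)}$. I expect the main obstacle to be the non-elementary input itself, namely that bounded pseudoconvex $\mathcal C^1$-domains are hyperconvex and hence both taut and $k_D$-complete; once this is granted, the rest is a semicontinuity-plus-compactness argument. A secondary point requiring care is that the case $m=1$ already asserts continuity of $\ell_D$ and $\kappa_D$, which must come from tautness rather than from the chain argument.
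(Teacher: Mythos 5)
Your proposal is correct, and its skeleton is the one the paper uses: Theorem~\ref{pcnc} gives pseudoconvexity, the Lipschitz/$\mathcal C^1$ boundary then gives hyperconvexity and hence tautness, tautness gives continuity of the base invariants ($m=1$), and the iterated invariants are handled by upper semicontinuity (infimum of u.s.c.\ functions, with your ``distribute $X-X_0$ evenly'' device for the constraint $\sum_j X_j=X$) plus a compactness argument on near-optimal chains, with $\hat\kappa_D=\kappa_D^{(2n)}$ via Carath\'eodory. The one genuine difference is the compactness mechanism for the intermediate points. You confine them to a compact set by invoking $k_D$-completeness of bounded hyperconvex domains (true, via the Harnack-type estimate $k_D(a,b)\ge\frac12\bigl|\log(-u(a))-\log(-u(b))\bigr|$ for a bounded psh exhaustion $u$, but a heavier input that you should cite or prove). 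The paper instead proves a claim directly from tautness: if $a'$ ranges over a compact subset of $D$ and $\ell_D(a',c)\le L<1$, then $c$ stays in a compact subset of $D$, because a sequence of analytic discs with centers in a compact set cannot diverge compactly, so a subsequence converges to a disc in $\mathcal O(\D,D)$ (this is \cite[Proposition 3.2.1]{JP} with a compact set of centers). The paper's route has the advantage of proving the underlying continuity statement (Proposition~\ref{cont}) for \emph{arbitrary} taut domains, not just hyperconvex ones, and of iterating the claim edge by edge along the chain without ever mentioning $k_D$; your route buys nothing extra here but is perfectly valid in the setting of the corollary, where hyperconvexity is available. Minor points: your lower-semicontinuity step implicitly uses the already-proved upper semicontinuity to keep $\ell_D^{(m)}(x_n,z_n)+1/n$ bounded away from $1$, which is fine but worth stating, and the attainment of the infima (part of Proposition~\ref{cont}(b), though not of the corollary) drops out of either compactness argument for free.
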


This corollary follows from Theorem \ref{pcnc} because any bounded pseudoconvex domain $D$ with Lipschitz
boundary is hyperconvex, thus taut (see e.g. \cite[Remark 3.2.3 (b)]{JP}; $D$ is called taut
if the space $\mathcal O(\D,D)$ is normal), and from the following proposition,
the proof of which is given in Section \ref{appen}.

\begin{prop}
\label{cont}
(a) For any domain $D\subset\C^n,$ $l_D^{(m)},$ $\ell_D^{(m)},$ $\k_D^{(m)},$ and $\hat\k_D$ are upper semicontinuous functions uniformly
in $m\in\N$ pointwise.

(b) For any taut domain $D\subset\C^n$ and any $m\in\N,$ the infimum in the definitions of $l_D^{(m)}$, $\ell_D^{(m)}$  and $\k_D^{(m)}$
is attained (i.e. there exist $m-1$ intermediate
points, resp. $m$ vectors,  and corresponding analytic discs realizing the
minimum), and these functions, as well as $\hat\k_D,$ are continuous.
\end{prop}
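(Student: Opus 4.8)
The plan is to prove part (a) by explicit construction of competing discs and chains, and part (b) by a normal family argument driven by tautness. First I would establish upper semicontinuity of the base functions $\ell_D$ and $\kappa_D$. Given a competitor $\varphi\in\mathcal O(\D,D)$ with $\varphi(0)=z_0$, $\varphi(\sigma)=w_0$ and $\sigma\le\ell_D(z_0,w_0)+\eps$, I would shrink to $\varphi_r(\zeta):=\varphi(r\zeta)$, so that $\varphi_r(\overline\D)\Subset D$, and perturb the center by setting $\Phi(\zeta):=\varphi_r(\zeta)+(z-z_0)g(\zeta)$ with $g$ holomorphic, $g(0)=1$, and $g$ vanishing at the parameter where $\varphi_r$ hits $w_0$; for $|z-z_0|$ small $\Phi$ still maps into $D$ and joins $z$ to $w_0$, giving $\ell_D(z,w_0)\le\ell_D(z_0,w_0)+2\eps$, and symmetrically in the second variable. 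Perturbing $\varphi'(0)$ instead handles $\kappa_D$. Once $\ell_D$ is u.s.c., each $\ell_D^{(m)}$ is u.s.c. for free: writing $\ell_D^{(m)}(z,w)=\inf_{\vec y}\big(\ell_D(z,y_1)+\sum_{j=1}^{m-2}\ell_D(y_j,y_{j+1})+\ell_D(y_{m-1},w)\big)$ exhibits it as an infimum of functions of $(z,w)$ that are u.s.c. (finite sums of u.s.c. functions), and an infimum of u.s.c. functions is u.s.c.; the same remark applies to $\kappa_D^{(m)}$ and, after a further infimum over $m$, to $\hat\kappa_D$.

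The delicate point is uniformity in $m$. For this I would fix $(z_0,w_0)$, choose for each $m$ a near-optimal chain $z_0=y_0,\dots,y_m=w_0$, and build a competing chain for a nearby $(z,w)$ by keeping the interior nodes and replacing only the two endpoints, so that the excess is $[\ell_D(z,y_1)-\ell_D(z_0,y_1)]+[\ell_D(y_{m-1},w)-\ell_D(y_{m-1},w_0)]$. The crucial observation is that the nodes cannot escape: since the chain cost is $\le\ell_D(z_0,w_0)+\eps$, the partial sums control $k_D(z_0,y_j)$ from above (via $\tanh k_D\le\inf_m\ell_D^{(m)}$), confining every $y_j$ to a fixed Kobayashi ball independent of $m$. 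Thus uniformity in $m$ reduces to an \emph{equi}-upper-semicontinuity of $\ell_D(\cdot,y)$ at $z_0$, uniform for $y$ in that ball. Because the connecting parameter of a near-optimal disc from $z_0$ to such a $y$ is then bounded away from $1$, I can run the perturbation above on a sub-disc $\varphi_r$ with $r$ fixed, and the only quantity left to control is $\mathrm{dist}(\varphi_r(\overline\D),\partial D)$ uniformly over the relevant discs. This uniform interior-distance bound is the main obstacle: it is exactly where tautness (normality of the connecting family) would make the step immediate, while for a general domain one must instead squeeze it out of a localization of the Lempert function to small balls (Lemma \ref{royfin}) together with the comparison $\ell_D\le\ell_B$ on balls $B\Subset D$ and the lower bound $\ell_D\ge\ell_{B(0,R)}$ from an enclosing ball.

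For part (b) the hypothesis of tautness makes $\mathcal O(\D,D)$ a normal family, and this drives both attainment and the missing lower semicontinuity. To attain the infimum defining $\ell_D^{(m)}(z,w)$, I would take a minimizing sequence of chains with their connecting discs $\psi_0,\dots,\psi_{m-1}$ and extract limits inductively: $\psi_0$ takes the fixed interior value $z$, so by the normal-family dichotomy it cannot be compactly divergent and converges locally uniformly to a disc into $D$, whence $y_1=\lim\psi_0(\alpha_0)\in D$; the next disc then has interior initial value, and so on, so all $m$ discs converge to discs into $D$ and all nodes remain interior. The bounded total cost keeps each parameter $|\alpha_j|\le 1-\delta$, so the limiting parameters lie in $\D$, and the limit chain realizes the minimum. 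The same extraction gives lower semicontinuity: if $x_k\to x_0$ with $\ell_D^{(m)}(x_k)\to\liminf$, the limiting chain is admissible for $x_0$ with cost $\le\liminf$. Combined with the upper semicontinuity from part (a), this yields continuity of $\ell_D^{(m)}$; since $l_D=\tanh^{-1}\ell_D$ is continuous as soon as $\ell_D$ is, the identical arguments apply verbatim to $l_D^{(m)}$ and to the vector versions $\kappa_D^{(m)}$.

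Finally, for $\hat\kappa_D$ I would remove the infimum over $m$ by a Carath\'eodory-type bound: $\hat\kappa_D(z;\cdot)$ is the largest seminorm dominated by $\kappa_D(z;\cdot)$, so its indicatrix is the convex hull of that of $\kappa_D$, and in the real $2n$-dimensional space $\C^n$ every decomposition may be taken with at most $2n$ vectors, i.e. $\hat\kappa_D=\kappa_D^{(2n)}$ (see \cite{JP}); continuity of $\hat\kappa_D$ then follows from the finite case already settled. The one genuinely hard ingredient in the whole scheme is the uniform interior-distance control flagged in the second paragraph — equivalently, the equi-upper-semicontinuity of $\ell_D$ in one variable over Kobayashi-bounded sets — which is automatic under the tautness needed for (b) but for the general domains of (a) rests on the ball localization.
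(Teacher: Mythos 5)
Your part (b) is essentially the paper's argument: tautness gives normality of the connecting discs, the intermediate points of a near-optimal chain stay in a compact subset of $D$ (the paper isolates this as a Claim derived from \cite[Proposition 3.2.1]{JP}), attainment and lower semicontinuity follow, and together with upper semicontinuity this gives continuity. The basic upper semicontinuity in (a) also matches the paper: each $\ell_D^{(m)}$ is an infimum of sums of u.s.c.\ functions. The problem is the uniformity in $m$, which is the only nontrivial content of (a), and there your argument has a genuine gap that you yourself flag. Your chain-surgery scheme needs two inputs that are unavailable for a general domain: first, that the nodes $y_j$ of a near-optimal chain are confined to a fixed relatively compact subset of $D$ --- but a Kobayashi ball $\{y:k_D(z_0,y)\le R\}$ need not be relatively compact in $D$ (for $D=\B^2\setminus\{0\}$ it accumulates at the deleted boundary point, and for a non-hyperbolic $D$ it can be all of $D$); second, an equi-upper-semicontinuity of $\ell_D(\cdot,y)$ uniform over that set, which is exactly the ``uniform interior-distance bound'' you admit you cannot establish without tautness. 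Since statement (a) is asserted for arbitrary domains, this route does not close.

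The paper avoids all of this with a short monotonicity argument: $(\ell_D^{(m)}(z_0,w_0))_{m}$ is decreasing in $m$ and bounded below, so one may fix $m_0$ with $\ell_D^{(m_0)}(z_0,w_0)\le\inf_{m}\ell_D^{(m)}(z_0,w_0)+\eps$; on a neighborhood where the single u.s.c.\ function $\ell_D^{(m_0)}$ exceeds its value at $(z_0,w_0)$ by at most $\eps$, one gets for every $m\ge m_0$ that $\ell_D^{(m)}(z,w)\le\ell_D^{(m_0)}(z,w)\le\ell_D^{(m_0)}(z_0,w_0)+\eps\le\ell_D^{(m)}(z_0,w_0)+2\eps$, while the finitely many indices $m<m_0$ are handled by their individual upper semicontinuity. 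No equicontinuity, no confinement of nodes, and no localization lemma is needed. Your closing observation that $\hat\kappa_D=\kappa_D^{(2n)}$ by Carath\'eodory's convex-hull theorem is correct and gives a legitimate alternative way to deduce the continuity of $\hat\kappa_D$ in part (b).
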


\subsection{Open Questions.}
\begin{enumerate}
\item
The above shows that continuity $l_D$ the Lempert function can be seen as another relaxation of
the condition that $\ell_D$ satisfy the triangle inequality. For a $\mathcal C^1$-smooth bounded
domain $D$, does the continuity of $l_D$ imply pseudoconvexity of $D$? It would mean that among bounded
$\mathcal C^1$-smooth domains, continuity of the Lempert function is equivalent
to pseudoconvexity (or tautness, or hyperconvexity, since they are all equivalent in this case).
\item
Our methods only work for domains with boundary of at least $\mathcal C^1$ regularity. Does the
implication still hold for domains with Lipschitz boundary, or even H\"older?

If the boundary is very irregular, there are counterexamples.  For instance, let $D=\B^2 \setminus \{0\}$,
where $\B^2$ is the unit ball in $\C^2$. Then one can easily show  that
$\ell_D=\ell_{\B^2}|_D$ and thus satisfies the triangle inequality, while $D$ is clearly not pseudoconvex.
In Remark \ref{avoid}, Section \ref{model}, we mention some examples of the form $D \setminus E$, where $E$ is an ``exceptional'' closed set;
but none of those examples has a boundary locally representable by the graph of a continuous function.
\item
Finally, the main open question is whether the converse of our theorem holds for $\mathcal C^1$-smooth
bounded domains: if $D$ is pseudoconvex, does $\ell_D$ satisfy a QTI?

\end{enumerate}

\section{Sufficient conditions for a QTI}
\label{suffcond}

We first give a Lempert function version of a localization Lemma which goes back to Royden \cite{R}
for the case of the Kobayashi-Royden metric. For $A,B \subset D$, let
$\ell_D(A,B):= \inf\{ \ell_D(z,w): z\in A, w\in B\}$.

\begin{lem}
\label{royfin}
Let $V \Subset U$ be two open sets such that $D\cap U$ is nonempty and connected, $D\setminus U \neq \emptyset$;
let $z,w \in D\cap V$,
then
\[
\ell_D (D\cap V, D\setminus U)\ell_{D\cap U}(z,w) \le \ell_D(z,w) .
\]
\end{lem}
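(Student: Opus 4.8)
The plan is to bound $\ell_D(z,w)$ from below by taking an arbitrary analytic disc competing for $\ell_D(z,w)$ and either restricting it to a concentric subdisc that remains inside $D\cap U$, or else showing its radius is already large. Set $\delta:=\ell_D(D\cap V,D\setminus U)$. Since $D$ is a domain with $D\setminus U\neq\emptyset$, and $\ell_D<1$, we have $\delta<1$; and since $D\cap U$ is a nonempty connected domain, the standard fact that any two points of a domain lie on a common analytic disc gives $\ell_{D\cap U}(z,w)<1$. It then suffices to prove, for every $\varphi\in\mathcal O(\D,D)$ with $\varphi(0)=z$ and $\varphi(\alpha)=w$, the single-disc estimate
\[
\delta\,\ell_{D\cap U}(z,w)\le|\alpha|,
\]
because taking the infimum over all such $\varphi$ replaces $|\alpha|$ on the right by $\ell_D(z,w)$.

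Fix such a $\varphi$ and introduce the exit radius $\rho:=\sup\{r\in[0,1):\varphi(\overline{r\D})\subset U\}$, which is positive since $\varphi(0)=z\in V\Subset U$ and $\varphi$ is continuous, while the defining set is an interval containing $0$. If $\varphi(\D)\subset U$, then $\varphi$ is itself an analytic disc in $D\cap U$ through $z$ and $w$, so $\ell_{D\cap U}(z,w)\le|\alpha|$ and the estimate follows from $\delta<1$. Otherwise $\rho<1$, and a compactness argument produces $\lambda_0$ with $|\lambda_0|=\rho$ and $\varphi(\lambda_0)\notin U$; as $\varphi(\lambda_0)\in D$ this means $\varphi(\lambda_0)\in D\setminus U$. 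Reading the disc $\varphi$ directly gives $\ell_D(z,\varphi(\lambda_0))\le|\lambda_0|=\rho$, and combined with $z\in D\cap V$, $\varphi(\lambda_0)\in D\setminus U$ this yields the crucial inequality $\delta\le\rho$.

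It remains to compare $|\alpha|$ with $\rho$. If $|\alpha|<\rho$, the rescaled disc $\psi(\zeta):=\varphi(\rho\zeta)$ maps $\D$ into $D\cap U$, because $\varphi(\rho\D)=\bigcup_{s<\rho}\varphi(\overline{s\D})\subset U$; it satisfies $\psi(0)=z$ and $\psi(\alpha/\rho)=w$ with $|\alpha/\rho|<1$, whence $\ell_{D\cap U}(z,w)\le|\alpha|/\rho\le|\alpha|/\delta$, which is exactly what we want. If instead $|\alpha|\ge\rho$, then $|\alpha|\ge\rho\ge\delta$, and using $\ell_{D\cap U}(z,w)<1$ we get $\delta\,\ell_{D\cap U}(z,w)<\delta\le|\alpha|$. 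In every case the single-disc estimate holds, and passing to the infimum over $\varphi$ proves the lemma.

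The main point to get right is the construction of the restricted disc together with the inequality $\delta\le\rho$: the naive restriction to the subdisc of radius $\rho$ only passes through $w$ when the parameter $\alpha$ lies inside it, so the genuinely separate case $|\alpha|\ge\rho$ must be disposed of by the elementary observation that then $|\alpha|$ already dominates $\delta$. I would also be careful to record that $\ell_{D\cap U}(z,w)$ is finite, which is where the connectivity hypothesis on $D\cap U$ is used.
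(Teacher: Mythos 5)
Your proof is correct and follows essentially the same route as the paper: restrict the competing disc to the largest concentric subdisc staying in $U$, note that its radius dominates $\ell_D(D\cap V, D\setminus U)$, and rescale. You are in fact slightly more careful than the printed proof, which silently absorbs the case $|\alpha|\ge\rho$ into the inequality $\alpha/r_0\ge\ell_{D\cap U}(z,w)$ (valid there only because $\ell_{D\cap U}(z,w)<1$), a point you treat explicitly.
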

\begin{proof}
Let $\eps>0$, and $\varphi$ be any holomorphic map from $\D$ to $D$ such that $\varphi(0)=z$, and
$\varphi(\alpha)=w$ for some $\alpha < \ell_D(z,w)+\eps$. If for any $\eps>0$ and any $r<1$, $\varphi (D(0,r))\subset D\cap U$,
then $\ell_D(z,w) =\ell_{D\cap U}(z,w) $ and we are done.  If not, let
\[
r_0:= \sup\{ r: \varphi(D(0,r)) \subset D\cap U\} <1.
\]
Since $\varphi(0)=z\in V$, we have $r_0 \ge \ell_D (D\cap V, D\setminus U)$.
Let $\psi(\zeta):= \varphi(r_0\zeta)$ for $\zeta \in \D$;
$\psi(\D)\subset D\cap U$, so $\frac\alpha{r_0} \ge\ell_{D\cap U}(z,w)$, thus
\[
 \ell_D(z,w)+\eps > \alpha \ge\ell_{D\cap U}(z,w) \ell_D (D\cap V, D\setminus U).
 \]
Since this holds for all $\eps>0$, we are done.
\end{proof}

\begin{proof*}{\it Proof of Proposition \ref{suff} for $\ell_D$.}
Suppose the result fails; then there exists for any $k\in\N$ points $a_k, b_k, c_k \in D$
such that
\begin{equation}
\label{trifail}
1 \ge \ell_D(a_k, c_k) \ge k\left( \ell_D(a_k, b_k) + \ell_D( b_k, c_k) \right) \ge
k(\mbox{diam }D)^{-1}\left( \| a_k - b_k \| + \| b_k- c_k\| \right),
\end{equation}
so passing to a subsequence all three points $a_k, b_k, c_k$ tend to the same $p\in \overline D$.
If $p\in D$, then in a neighborhood of $p$, $l_D (z,w) \asymp \|z-w\|$, and \eqref{trifail} cannot hold
when $k$ is large, so $p \in \partial D$.

Since $D$ is strictly pseudoconvex, we may take neighborhoods $V \Subset U$ of $p$ such that $D\cap U$ is biholomorphic to a convex domain. For $k$ large enough, $a_k, b_k, c_k \in D\cap V$. By Lemma \ref{royfin}, $l_D \asymp l_{D\cap U}$ on $D\cap V$,
which contradicts \eqref{trifail} once again for possibly larger $k$.
\end{proof*}

\section{Necessary condition for a QTI}
\label{neccond}

\begin{proof*}{\it Proof of Theorem \ref{pcnc}.}
We proceed by contradiction: supposing that $D$ is not pseudoconvex, we will show that there are triples of points
which violate any QTI.

Recall that if $D$ is not pseudoconvex, then there exists an affine $2$-complex-dimensional subspace $P$ such that
$D\cap P$ is not pseudoconvex as a subset of $\C^2$ \cite{Hi}, see also \cite{Jb}. Choose coordinates
such that $D\cap P=D_0 \times\{0\}$, with $D_0 \subset \C^2$.

First we reduce part of our problem to the study of a model domain.

\begin{lem}
\label{betterHo}
If $D\subset \C^2$ is a bounded non pseudoconvex domain, there exists $p\in \partial D$, $U$ a neighborhood of $p$,
and $\Phi$ a biholomorphism on $U$ such that $\Phi(p)=0$,
$\Phi^{-1}(\overline {\D^2}) \subset U$ and 
 $\Phi^{-1}(G_2)  \subset D$, where 
 $G_2:= \{ z \in \D^2: \Re z_1 < |z_2|^2\}$.
\end{lem}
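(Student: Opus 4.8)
The plan is to locate a boundary point $p$ at which $D$ is, in a quantitative sense, strictly pseudoconcave, and then to flatten the local picture so that the pseudoconcave side dominates the paraboloidal model $G_2$. The guiding observation is that $0\in\partial G_2$ is a strictly pseudoconcave point of the Hartogs-type region $G_2$, with $z_1$ playing the role of the complex-normal direction and $z_2$ that of the complex-tangential (pseudoconcave) direction: indeed, the slice $\{\Re z_1=c\}\cap G_2$ is the annular region $\{|z_2|>\sqrt c\}$ for small $c>0$, a Hartogs figure. So I want to exhibit the same Hartogs configuration inside $D$ near $p$ and transport it onto $G_2$.

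First I would produce $p$ from the failure of pseudoconvexity. Since $D$ is not pseudoconvex, the function $-\log\delta_D$, where $\delta_D(z):=\operatorname{dist}(z,\partial D)$, is not plurisubharmonic, so there is a point $q_0\in D$ and a complex line on which its restriction fails the sub-mean-value inequality. Perturbing the harmonic majorant to be strict and restricting to that line, I obtain, after a unitary affine normalization putting $q_0=0$ and the line in the $z_2$-direction, a holomorphic polynomial $P(\zeta)=\alpha+\beta\zeta+\tfrac12\gamma\zeta^2$ with $\delta_D(0,\zeta)\ge e^{-\Re P(\zeta)}$ for small $\zeta$, with equality only at $\zeta=0$. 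Hence the Euclidean balls $B_\zeta:=B((0,\zeta),e^{-\Re P(\zeta)})$ are contained in $D$ for $|\zeta|$ small, and their union is an open pseudoconcave region sitting inside $D$ whose boundary touches $\partial D$ at the point $p:=(e^{-\Re P(0)},0)$ (after a further rotation in $z_1$ aligning the $q_0\to p$ direction with the positive real $z_1$-axis). This $p$ is the desired boundary point.

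Next I would build $\Phi$. Translating $p$ to $0$ and dilating, the inclusion $\bigcup_\zeta B_\zeta\subset D$ reads, near $0$, as a region of the form $\{\Re w_1 < Q(w_2)+o(|w|^2)\}$, where $Q$ is a real quadratic in $w_2$; the strict failure of the sub-mean-value inequality for $-\log\delta_D$ forces the complex-Hessian part of $Q$ to be strictly negative, i.e. $Q(w_2)=c|w_2|^2+\Re(\text{holomorphic quadratic})+\dots$ with $c>0$. A holomorphic shear $w_1\mapsto w_1-(\text{that holomorphic quadratic})$ absorbs the pluriharmonic terms into the normal coordinate, and a final rescaling of $w_1,w_2$ normalizes $c=1$. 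The composition of the affine normalization, the shear and the dilation is the biholomorphism $\Phi$; choosing $U$ to be a small ball about $p$ guarantees $\Phi^{-1}(\overline{\D^2})\subset U$, and by construction the rescaled region contains $G_2$, so $\Phi^{-1}(G_2)\subset\bigcup_\zeta B_\zeta\subset D$.

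I expect the main obstacle to be the absence of boundary regularity: with $\partial D$ merely the boundary of a domain, $\delta_D$ is only continuous (Lipschitz), so there is no smooth defining function to Taylor-expand, and all the local geometry must be extracted from $-\log\delta_D$ alone. The crux is therefore twofold: first, upgrading the bare failure of plurisubharmonicity to a strictly negative-definite (strictly pseudoconcave) direction, so that one obtains a genuine $|w_2|^2$ term rather than a degenerate inequality; and second, promoting the one-complex-dimensional touching disc to a full open region of model type. The second point is handled for free by working with Euclidean balls rather than a single analytic disc, since the balls supply the missing transverse directions $|w_1|^2,\dots$; but matching the quadratic data cleanly, and verifying that the rescaled $G_2$ fits strictly inside the union of balls uniformly in $\zeta$, is the delicate estimate.
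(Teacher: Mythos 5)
Your overall strategy---find a boundary point at which $D$ contains a strictly pseudoconcave quadratic model, then kill the pluriharmonic part by a holomorphic shear and rescale---is the same as the paper's; the paper outsources the first step to H\"ormander's Theorem 4.1.25 and spends its whole proof on the normalization. The genuine gap is in your first step, and it is a sign problem. From the failure of the sub-mean-value inequality for $-\log\delta_D$ along a line, the touching argument yields $-\log\delta_D(0,\zeta)\le h(\zeta)+\eps|\zeta-\zeta_*|^2+c$ with equality at an interior point, $h$ harmonic; hence the radius function $r(\zeta)$ of your inscribed balls satisfies $-\log r=h+\eps|\zeta-\zeta_*|^2+c$, which is \emph{sub}harmonic, so the tube $\bigcup_\zeta B\bigl((0,\zeta),r(\zeta)\bigr)$ is \emph{pseudoconvex} (and your cleaner variant $\delta_D(0,\zeta)\ge e^{-\Re P(\zeta)}$ gives a Levi-flat tube). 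Computing the envelope at the touching point $p$ produces a region of the form $\set{\Re w_1< -\eps'|w_2|^2+\Re(\text{holomorphic})-C(\Im w_1)^2+\cdots}$: the coefficient of $|w_2|^2$ comes out nonpositive, so the assertion that ``the strict failure of the sub-mean-value inequality forces $c>0$'' does not follow. A concrete test: for $D=G_2$ itself and $q_0=(-\eta,0)$ one gets $\delta_D(-\eta,\zeta)\ge\eta e^{-\eps|\zeta|^2}$ with equality only at $0$, and the union of those balls is $\set{\Re w_1<-\eta\eps|w_2|^2-\tfrac{(\Im w_1)^2}{2\eta}+\cdots}$ near $p=0$, which misses the pseudoconcavity of $G_2$ entirely. (A pseudoconvex tube touching $\partial D$ from inside exists in the ball too, so it carries no information.) The pseudoconcave sign in H\"ormander's theorem is extracted from the opposite configuration: the discs $\zeta\mapsto(0,\zeta)+\lambda e^{-P(\zeta)}v$ stay in $D$ over the boundary circle $|\zeta|=\rho$ while some ball over an interior $\zeta_1$ \emph{sticks out} of $D$; a continuity argument in $\lambda$ produces a critical analytic disc touching $\partial D$ at an interior parameter with all subcritical discs inside, and it is that touching disc, not the envelope of inscribed balls, that forces $\partial^2q/\partial z_2\partial\bar z_2<0$.

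Two further points would need repair even granting the model. With Euclidean balls the nearest boundary point to $q_0$ may lie in the direction of the chosen complex line itself, in which case your ``rotation in $z_1$'' is unavailable; H\"ormander avoids this by using directional distance functions. And the quadratic terms to be removed are not all pluriharmonic: the construction produces $(\Im w_1)^2$ and $w_1$--$w_2$ cross terms, which no holomorphic shear kills. The paper's proof handles exactly this by taking $\alpha_1$ large in the shear $z_1=z_1'+\alpha_1 z_1'^2-\alpha_2 z_2'^2$, so that $\alpha_1\Re(z_1'^2)$ absorbs the $(\Im z_1')^2$ contributions at the cost of a harmless multiple of $(\Re z_1')^2$, which is then eliminated through the reduction of $\Re z_1'+C_3(\Re z_1')^2<\tfrac13|z_2'|^2$ to $\Re z_1'<\tfrac14|z_2'|^2$. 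That absorption is the real content of the lemma and is absent from your sketch.
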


\begin{proof}
By \cite[Theorem 4.1.25]{Ho}, since $D$ is not pseudoconvex, there exists $p\in \partial D$, a neighorhood $U_1$ of $p$, and
an affine coordinate system in which $p=0$ and $D\cap U_1 \supset \D^2 \cap \{\Re z_1 + q(z_1,z_2)<0\}$, where $q$ is a
real valued  polynomial, homogeneous of degree $2$, with $\frac{\partial^2 q}{\partial z_2 \partial \bar z_2}(0)=-1$.

We can write $q(z_1,z_2)= q_1(z_1) + q_2(z_1,z_2) + \Re(\alpha_2 z_2^2) - |z_2|^2$, where the $q_j$ 
are real-valued and homogeneous of degree $2$, $\alpha_2 \in \C$, and 
\[
|q_2(z_1,z_2)|\le C_2 |z_1||z_2| \le \frac{C_2^2}2 |z_1|^2 + \frac12 |z_2|^2,
\]
so that, in a small enough neighborhood $U_2$ of $0$,
\[
q(z_1,z_2)\le C_1|z_1|^2 + \frac{C_2^2}2 |z_1|^2  + \Re(\alpha_2 z_2^2) - \frac12 |z_2|^2 .
\]
Now define a local biholomorphism $\Phi$ by 
\[
(z_1,z_2) = \Phi^{-1}(z'_1, z'_2):= (z_1'+\alpha_1 {z'_1}^2 -\alpha_2 {z'_2}^2, z'_2),
\] 
where $\alpha_1> C_1+\frac{C_2^2}2$. 
In these new coordinates, the condition $\Re z_1 + q(z_1,z_2)<0$ is equivalent to 
$\Re z'_1 + \tilde q(z'_1,z'_2)<0$ with
\begin{multline*}
\tilde q(z'_1,z'_2)< (C_1+\frac{C_2^2}2 ) |z'_1|^2 + o(|z'_1|^2+|z'_2|^2) + \alpha_1 \Re ({z'_1}^2)  - \frac12 |z'_2|^2
\\
\le (C_1+\frac{C_2^2}2 +\alpha_1 +1)(\Re z'_1)^2 - \frac13 |z'_2|^2,
\end{multline*}
when $(z'_1, z'_2)\in U_2$, a small enough neighborhood of $0$. Finally,
setting $C_3:=C_1+\frac{C_2^2}2 +\alpha_1 +1$, the condition reduces to
$\Re z_1 + C_3 (\Re z'_1)^2 <\frac13 |z'_2|^2$, i.e. 
$\Re z_1 <\frac13   (1+ C_3 (\Re z'_1))^{-1} |z'_2|^2$, which is implied by
$\Re z_1 <\frac14 |z'_2|^2$ when $z\in U_3$, a possibly smaller neighborhood of $0$.

A last linear change of coordinates yields the desired form of $G_2$.
\end{proof}

\begin{prop}
\label{upbd}
Choose coordinates as in Lemma \ref{betterHo} such that $0\in 
\partial D$ with $G_2\subset D$,
for $t \in (0,1)$, let $p_t := (-t,0)$. 

Then for $0< \delta < \eps$ small enough,
\[
\ell_{D}^{(2)}(p_\delta, p_\eps) \le\ell_{G_2}^{(2)}(p_\delta, p_\eps) \le 2 \frac{\eps-\delta}{\eps^{1/2}},
\]
and $\kappa^{(2)}_D (p_\eps; (1,0)) \le 2 \eps^{1/2}$.
\end{prop}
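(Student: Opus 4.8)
The plan is to reduce both estimates to the model domain $G_2$ and then exhibit explicit analytic discs. Since the coordinates are chosen so that $G_2\subset D$, monotonicity of the Lempert function and of the Kobayashi--Royden metric under inclusion of domains gives $\ell_D\le \ell_{G_2}$ and $\kappa_D\le\kappa_{G_2}$ pointwise; because $\ell^{(2)}$ and $\kappa^{(2)}$ are formed by taking infima of sums of these, the same inequalities pass to them. This is exactly what makes the first inequality of the statement immediate, and it leaves me to bound $\ell_{G_2}^{(2)}(p_\delta,p_\eps)$ and $\kappa_{G_2}^{(2)}(p_\eps;(1,0))$.

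For the $\ell^{(2)}$ estimate the idea is to route the path through an intermediate point that bulges into the $z_2$-direction, exploiting the pseudoconcavity $\Re z_1<|z_2|^2$. I would fix $\eta$ slightly larger than $\frac{\eps-\delta}{2\eps^{1/2}}$, take the intermediate point $y:=(-\delta,\eta)$, and use two discs. The first is the complex-affine disc $\zeta\mapsto(-\eps+B\zeta,C\zeta)$ carrying $p_\eps$ to $y$, with $B\alpha=\eps-\delta$ and $C\alpha=\eta$; the second is the ``vertical'' disc $\zeta\mapsto(-\delta,\zeta)$ in the slice $\{z_1=-\delta\}$, carrying $y$ to $p_\delta$. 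The second disc lies in $G_2$ for free, since $\Re(-\delta)=-\delta<0\le|z_2|^2$, and it contributes $\ell_\D(\eta,0)=\eta$. The first disc contributes $\alpha$, which can be taken arbitrarily close to $\eta$, so the two together give $\ell_{G_2}^{(2)}(p_\delta,p_\eps)\le 2\eta$, hence the stated bound once $\eta$ is sent to $\frac{\eps-\delta}{2\eps^{1/2}}$.

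For the $\kappa^{(2)}$ estimate I would split the vector symmetrically, writing $(1,0)=X_++X_-$ with $X_\pm=\frac{1}{2B}(B,\pm C)$, and realize each $X_\pm$ by the complex-affine disc $\zeta\mapsto(-\eps+B\zeta,\pm C\zeta)$ through $p_\eps$, whose derivative at $0$ is $(B,\pm C)$. Each such disc shows $\kappa_{G_2}(p_\eps;X_\pm)\le\frac{1}{2B}$, so the two together bound $\kappa^{(2)}_{G_2}(p_\eps;(1,0))$ by $\frac1B$, which is of the required order once $C$ is taken close to $1$ (the largest value the bidisc allows) and $B$ close to its maximal admissible value, comparable to $\eps^{1/2}$.

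The main obstacle, common to both parts, is verifying that these affine discs actually map $\D$ into $G_2$, i.e. that $\Re(-\eps+B\zeta)<|C\zeta|^2$ for every $\zeta\in\D$. Because $\Re(B\zeta)\le B|\zeta|$ with equality on the positive real axis, while $|C\zeta|^2=C^2|\zeta|^2$ is rotation invariant, the worst case occurs on the real segment, where the condition becomes the one-variable quadratic inequality $Br-C^2r^2<\eps$ for $r\in[0,1)$. Its maximum $\frac{B^2}{4C^2}$ is $<\eps$ exactly when $B<2\eps^{1/2}C$, and substituting $B=(\eps-\delta)/\alpha$, $C=\eta/\alpha$ collapses this to the single scale condition $\eta>\frac{\eps-\delta}{2\eps^{1/2}}$ that fixes the size of the detour. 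Verifying the remaining membership in $\D^2$ and that the maximizing radius lies in $[0,1)$ is then routine for $0<\delta<\eps$ small enough.
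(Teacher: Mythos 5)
Your proof is correct and follows essentially the same route as the paper's: affine discs $\zeta\mapsto(-\eps+B\zeta,C\zeta)$ into $G_2$, an intermediate point of the form $(-\delta,\eta)$ joined to $p_\delta$ by the vertical disc $\zeta\mapsto(-\delta,\zeta)$, and a two-term splitting of $(1,0)$ using the tangent of that same affine disc; the paper simply fixes $B=\eps^{1/2}$, $C=1$ instead of optimizing over $B,C$, which is why your constants come out slightly better. One remark: your bound for $\kappa^{(2)}$ is of order $\eps^{-1/2}$, exactly as in the paper's own computation, so the quantity $2\eps^{1/2}$ in the statement must be read as $2\eps^{-1/2}$ (a typo in the paper; this is also the order needed to contradict the lower bound $\gg\eps^{-1/2}$ of Proposition \ref{C1}).
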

\begin{proof}
Let $q_\delta:= (-\delta, \frac{\eps-\delta}{\eps^{1/2}})$. Considering the disc $\psi (\zeta)= (-\delta,\zeta)$,
we clearly have $\ell_{G_2}(p_\delta,q_\delta) \le \frac{\eps-\delta}{\eps^{1/2}}$.

On the other hand, if we set $\zeta=x+iy$, the disc $\varphi(\zeta):= (-\eps+\eps^{1/2}\zeta, \zeta)$  satisfies
\[
|\varphi_2(\zeta)|^2 - \Re \varphi_1(\zeta) =
x^2+y^2 +\eps -\eps^{1/2} x \ge (x-\frac12 \eps^{1/2})^2 + \frac{3\eps}4 >0,
\]
so for $\eps$ small enough, $\varphi(\D)\subset G_2$ and since $\varphi (0)=p_\eps$,
$\varphi (\frac{\eps-\delta}{\eps^{1/2}})= q_\delta$,
$\ell_{G_2}(p_\eps,p_\delta) \le \frac{\eps-\delta}{\eps^{1/2}}$.

To obtain the estimate on  $\kappa^{(2)}_{G_2} (p_\eps; (1,0))$, notice
that $(1,0)= (1, \eps^{-1/2})+(0, -\eps^{-1/2})$. Then using 
$\varphi'(0)=\eps^{1/2}(1, \eps^{-1/2})$,
we see that $\kappa_{G_2} (p_\eps; (1, \eps^{-1/2})) \le \eps^{-1/2}$,
and likewise using the map $\psi (\zeta)= (-\eps,\zeta)$, we obtain
$\kappa_{G_2} (p_\eps; (0, -\eps^{-1/2})) \le \eps^{-1/2}$.
\end{proof}

To obtain the estimate in $D\subset \C^n$, we also denote $p_t=(-t,0, \dots, 0) \in \C^n$ by a slight abuse, then with $D_0$
as in the beginning of this section, 
$\ell_{D}^{(2)}(p_\delta, p_\eps) \le\ell_{D_0}^{(2)}(p_\delta, p_\eps)\le 2 \frac{\eps-\delta}{\eps^{1/2}}$, by 
Proposition \ref{upbd}. The computation for  $\kappa^{(2)}$ is analogous.


The proof of the theorem will conclude with the following lower estimates on $\ell_D(p_\delta,p_\eps)$ and $\kappa_D (p_\eps; (1,0))$.
\end{proof*}

\begin{prop}
\label{C1}
Let $D \subset \C^n$ be a bounded $\mathcal C^{1}$-smooth  domain, and
for any $p\in \partial D$, let $p_t := p+t n_p$, where $n_p$ is the inner normal vector at $p$. Then for
$0< \delta < \eps$ small enough, $\ell_D(p_\delta,p_\eps)) \gg \frac{\eps-\delta}{\eps^{1/2}}$ and $\kappa_D (p_\eps; (1,0))\gg \eps^{-1/2}$
as $\eps\to0$.
\end{prop}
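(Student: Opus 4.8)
The plan is to bound $\ell_D(p_\delta,p_\eps)$ from below by confining every competing analytic disc near $p$ and then reading off a sharp one–dimensional estimate for its first coordinate. The gain over the upper bound of Proposition \ref{upbd} comes from a single structural observation: a disc joining two points of the inner normal must have \emph{all its transverse components vanishing at both endpoints}, and this forces the first coordinate to live, essentially, in a cone of opening slightly larger than a half–plane.

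First I would localize. Applying Lemma \ref{royfin} with neighborhoods $V\Subset U$ of $p$ gives $\ell_D(p_\delta,p_\eps)\ge \ell_D(D\cap V,D\setminus U)\,\ell_{D\cap U}(p_\delta,p_\eps)$, and the prefactor $\ell_D(D\cap V,D\setminus U)$ is a positive constant independent of $\delta,\eps$; so it suffices to estimate $\ell_{D\cap U}$ from below. Choose coordinates with $p=0$ and $n_p=-e_1$, so that $p_t=(-t,0,\dots,0)$. By $\mathcal C^1$-smoothness the boundary near $0$ is a graph $\Re z_1=h(\Im z_1,z_2,\dots,z_n)$ with $h(0)=0$ and $\nabla h(0)=0$, where the arguments of $h$ are exactly the transverse coordinates; hence, given any $\theta>0$, after shrinking $U$ one has $\Re z_1<\theta\,\|(\Im z_1,z_2,\dots,z_n)\|$ on $D\cap U$. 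I would then fix a holomorphic $\varphi=(\varphi_1,\dots,\varphi_n)\colon\D\to D\cap U$ with $\varphi(0)=p_\eps$, $\varphi(\alpha)=p_\delta$, and $|\alpha|$ close to $\ell_{D\cap U}(p_\delta,p_\eps)$.

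The key point is that the transverse components $\Im\varphi_1,\varphi_2,\dots,\varphi_n$ all vanish at $\zeta=0$ and at $\zeta=\alpha$. The Schwarz lemma applied to each bounded $\varphi_j$ ($j\ge2$), which has these two zeros, gives $\|(\varphi_2,\dots,\varphi_n)(\zeta)\|\le C\,|\zeta|\,|B_\alpha(\zeta)|$ with $B_\alpha(\zeta)=\frac{\zeta-\alpha}{1-\bar\alpha\zeta}$, so this contribution is small away from nothing but the endpoints. Consequently $\varphi_1$ maps $\D$ into the region $\{\,\Re w<\theta|\Im w|+\theta\,\|(\varphi_2,\dots,\varphi_n)(\zeta)\|\,\}$, which is (up to the small shift) a cone of half-angle $\gamma=\tfrac\pi2+\arctan\theta$ around $\R_-$, with $\varphi_1(0)=-\eps$ and $\varphi_1(\alpha)=-\delta$ on its axis. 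Mapping the cone to the right half-plane by $w\mapsto(-w)^{\mu}$, $\mu=\frac{\pi/2}{\gamma}\to1$ as $\theta\to0$, and invoking the Schwarz–Pick inequality, I would obtain $\ell_{D\cap U}(p_\delta,p_\eps)\gtrsim \tfrac{\mu}{2}\log(\eps/\delta)$. For a fixed ratio, say $\delta=\eps/2$, this is bounded below by a positive constant, whereas the upper bound of Proposition \ref{upbd} satisfies $\frac{\eps-\delta}{\eps^{1/2}}=\tfrac12\eps^{1/2}\to0$; thus $\ell_D(p_\delta,p_\eps)\gg \frac{\eps-\delta}{\eps^{1/2}}$. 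The infinitesimal statement $\kappa_D(p_\eps;(1,0))\gg\eps^{-1/2}$ follows from the same confinement applied to first derivatives: the transverse part of $\varphi'(0)$ is controlled through the double vanishing, so $\varphi_1$ is pinned near the wall of the cone at depth $\eps$, and the half-plane metric $\kappa(w;1)=\tfrac1{2|\Re w|}$ forces $\kappa_D(p_\eps;(1,0))\gtrsim \eps^{-1}\gg\eps^{-1/2}$.

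The main obstacle is precisely the $\zeta$-dependent shift $\theta\,\|(\varphi_2,\dots,\varphi_n)(\zeta)\|$: this is a \emph{moving wall}, so one cannot directly apply Schwarz–Pick with a fixed target cone. Bounding the shift crudely by its supremum $\theta C$ reintroduces a constant wall of size $\gg\eps$ and destroys the estimate (the far-wall phenomenon); the whole point is that the shift vanishes to first order at $0$ and $\alpha$. In the degenerate model $\Re z_1<|z_2|^2$ of Lemma \ref{betterHo} there is no cone term, only such a shift, and there a Borel--Carath\'eodory estimate using $\sup_{|\zeta|=\rho}\Re\varphi_1\le C\rho^4$ together with the optimal choice $\rho\asymp\eps^{1/4}$ already yields $\ell\gtrsim\frac{\eps-\delta}{\eps^{3/4}}$, which is $\gg\frac{\eps-\delta}{\eps^{1/2}}$. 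I expect the general case to be handled by combining the cone mechanism with this shift estimate, either by absorbing the holomorphic majorant $\zeta B_\alpha(\zeta)$ of the transverse part into $\varphi_1$ or by working on the two-dimensional slice $D_0$ used in Proposition \ref{upbd} and comparing with the model domains studied in Section \ref{model}. Verifying that all bounds are uniform in $\delta<\eps$ as $\eps\to0$, and uniform as the regularity modulus $\theta=\theta(U)\to0$, is where the bulk of the technical work lies.
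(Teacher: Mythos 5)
Your opening moves coincide with the paper's: localization via Lemma \ref{royfin}, the graph representation with $\Re z_1<\theta\,\|(\Im z_1,z')\|$ for arbitrarily small $\theta$, and the double zero of $\varphi_2,\dots,\varphi_n$ at $0$ and $\alpha$. But the mechanism you then propose --- confining $\varphi_1$ to a cone of half-angle $\tfrac\pi2+\arctan\theta$ with vertex at the origin and applying Schwarz--Pick after $w\mapsto(-w)^\mu$ --- cannot be made to work, and the conclusions you extract from it are false, not merely unproved. The image of $\varphi_1$ is \emph{not} contained in such a cone: the actual constraint is $\Re\varphi_1(\zeta)<\theta\bigl(|\Im\varphi_1(\zeta)|+\|(\varphi_2,\dots,\varphi_n)(\zeta)\|\bigr)$, and the transverse term, of size $|\zeta|^2$, already equals the depth $\eps$ when $|\zeta|\asymp\eps^{1/2}$, far inside $\D$; in particular $\varphi_1$ may take values on $(0,+\infty)$, outside every cone around $\R_-$. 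Your intermediate claims $\ell_{D\cap U}(p_{\eps/2},p_\eps)\gtrsim c>0$ and $\kappa_D(p_\eps;(1,0))\gtrsim\eps^{-1}$ are both wrong: for a bounded $\mathcal C^\infty$ domain coinciding near $0$ with $\{\Re z_1<|z_2|^2\}$, Proposition \ref{modelest} gives $\ell_D(p_{\eps/2},p_\eps)\le\ell_{G_2}(p_{\eps/2},p_\eps)\asymp\eps^{1/4}\to0$, and the disc $\zeta\mapsto(-\eps+\eps^{3/4}\zeta,\zeta^2)$ gives $\kappa_D(p_\eps;(1,0))\le\eps^{-3/4}\ll\eps^{-1}$. (Also, $\ell_D\le1$ by definition, so no lower bound of order $\log(\eps/\delta)$ can hold.) Any correct bound must degenerate with the modulus of continuity of the normal; a bound independent of it is impossible.

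The idea you are missing --- though your Borel--Carath\'eodory remark for the model case, with its ``optimal choice of $\rho$'', is pointing at it --- is the choice of an intermediate radius adapted to the $\mathcal C^1$ modulus. Writing the bound on the graph as $\psi(x)=x\psi_1(x)$ with $\psi_1(x)\to0$, the paper defines $\beta_\eps$ by $\psi(C\beta_\eps^2)=\eps$, so that $\beta_\eps=\eps^{1/2}F(\eps)$ with $F(\eps)\to\infty$. On $D(0,\beta_\eps)$ the transverse shift is at most $\eps$, hence $\varphi_1(D(0,\beta_\eps))$ lies in a region avoiding the ray $[\eps,+\infty)$; the map $f(\xi)=-(-\xi+\eps)^{1/2}$ sends that region into a half-plane with $f(-\eps)=-(2\eps)^{1/2}$ and $f(-\delta)=-(\eps+\delta)^{1/2}$, and Schwarz--Pick applied to the rescaled map on $D(0,\beta_\eps)$ yields $\alpha/\beta_\eps\gtrsim(\eps-\delta)/\eps$, i.e. $\alpha\gtrsim F(\eps)(\eps-\delta)\eps^{-1/2}$. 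The factor $\gg1$ in the statement is exactly $F(\eps)=\beta_\eps/\eps^{1/2}$; no improvement of the exponent, and no cone aperture $\mu\to1$, is involved. The same rescaled Schwarz lemma at $\zeta=0$ gives $\kappa_D(p_\eps;(1,0))\gtrsim\eps^{-1/2}F(\eps)$. As submitted, your argument stops precisely at ``I expect the general case to be handled by\dots''; that unresolved step is the entire content of the proof.
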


\begin{proof}
Using Lemma \ref{royfin}, it will be enough to prove the conclusion of the Proposition for  $D \cap \D^n$,
so by the contracting property of the Lempert function, 
it is enough to find a domain $G$ such that  $D \cap \D^n \subset G \subset \D^n$
so that the conclusion holds for $G$.

Represent the boundary locally by $\{ \Re z_1 < \Phi (\Im z_1, z')\}$,
with $\nabla \Phi(0,0)=0$.
The regularity of the boundary  implies that
$\Phi (\Im z_1, z') \le \psi(|\Im z_1|+|z'|)$, with $\psi: [0,\infty) \longrightarrow [0,\infty)$,
$\psi(x)=x\psi_1(x)$, $\lim_{x\to0}\psi_1(x)=0$. We way take $\psi_1$ increasing.
Localizing, we reduce ourselves to studying $l_G(p_\eps,p_\delta)$,
where $G:= \{ z \in \D^n: \Re z_1 < \psi(|\Im z_1|+\|z'\|) \}$.

Let $\varphi=(\varphi_1, \dots, \varphi_n): \D \longrightarrow D$ be 
a holomorphic map such that $\varphi (0)=  p_\eps$,
and there is $\alpha>0$ such that $\varphi(\alpha)= p_\delta$. 
Write $B_\alpha (\zeta):= \zeta \frac{\zeta-\alpha}{1-\alpha \zeta}$.
For $j\ge 2$, $\varphi_j(\zeta) = h_j(\zeta) B_\alpha (\zeta)$,
 where $h_j$ is a holomorphic function with $\sup_{\D} |h_j| \le 1$.

It follows that $\|(\varphi_2(\zeta),\dots,\varphi_n(\zeta))\|\le C \beta^2$ when $|\zeta|<\beta$,
so
\[
\varphi_1(D(0,\beta)) \subset
\{ z\in \D: \Re z < \psi(|\Im z|+C\beta^2)\}=:P_\beta.
\]

Choose $\beta_\eps$ to be the unique number such that
$\psi(C \beta_\eps^2)=\eps$.  Notice that $\beta_\eps = \eps^{1/2} F(\eps)$, with $\lim_{\eps\to0} F(\eps)=\infty$. If $\alpha\ge \beta_\eps$, we are done.

If not, define the holomorphic map $f$ on $\C\setminus  [\eps,\infty) \supset P_{\beta_\eps}$
by $f(\xi):=-(-\xi+\eps)^{1/2}$,
where the argument of the square root is taken in $(-\pi,\pi)$.
We then have $f(P_{\beta_\eps})\subset \{\Re z <0\}$, and 
$f\circ \varphi_1 (0) = f(-\eps)= - (2\eps)^{1/2}$,
$f\circ \varphi_1 (\alpha) = f(-\delta)= - (\eps+\delta)^{1/2}$,
so that
\[
\frac{\alpha}{\beta_\eps} \ge
\frac{(2\eps)^{1/2}-(\eps+\delta)^{1/2}}{(2\eps)^{1/2}+(\eps+\delta)^{1/2}}
=
\frac{\eps-\delta}{((2\eps)^{1/2}+(\eps+\delta)^{1/2})^2}
\ge
\frac{\eps-\delta}{8\eps} ,
\]
so $\alpha \ge \beta_\eps \frac{\eps-\delta}{8\eps} = \frac18 F(\eps)\frac{\eps-\delta}{\eps^{1/2}}$.

\smallskip

For the infinitesimal analogue, suppose now that $\varphi(0)=p_\eps$
and there is $\lambda \in \C$ such that $\varphi'(0)=\lambda (1,0,\dots,0)$.
Notice that $B_0(\zeta)=\zeta^2$, so we have the same inequalities on
$\varphi_j$, $j\ge 2$. Choose $\beta_\eps$, $P_{\beta_\eps}$ and $f$ as above,
and applying the Schwarz Lemma to the map
\[
\D \longrightarrow \D, \qquad 
z\mapsto \frac{f\circ \varphi_1 (\beta_\eps z) + (2\eps)^{1/2}}{f\circ \varphi_1 (\beta_\eps z) - (2\eps)^{1/2}},
\]
we find $1 \ge \beta_\eps |\lambda| \frac{1}{(2\eps)^{1/2}} \frac{1}{(2\eps)^{1/2}}= |\lambda| \eps^{-1/2} F(\eps)$, so that $\kappa_D (p_\eps; (1,0))\ge  \eps^{-1/2} F(\eps)$.
\end{proof}

\section{Model Domains}
\label{model}

For $0<\mu \le 2$, let us define
\begin{equation}
\label{defG}
G_\mu := \{z\in \D^n: \Re z_1 < \sum_{j=2}^n |z_j|^\mu \} \mbox{ and } 
\tilde G_\mu := \{z\in \D^n: \Re z_1 < | \Im z_1|^\mu + \sum_{j=2}^n |z_j|^\mu \}.
\end{equation}

We have used the case $n=2, \mu=2$ above.

Note that since $G_\mu \subset \tilde G_\mu $, for any invariant metric or function $L$, $L_{G_\mu} \ge L_{\tilde G_\mu}$, so
we will prove the upper estimates for $G_\mu$ and the lower estimates for $\tilde G_\mu$. We also
have $G_\mu \subset G_{\mu'} $, $\tilde G_\mu \subset \tilde G_{\mu'} $ when ${\mu} \ge {\mu'}$,
so $\mu \mapsto \ell_{\tilde G_\mu}(p_\delta, p_\eps)$, $\mu \mapsto \ell_{G_\mu}(p_\delta, p_\eps)$
are increasing functions of $\mu$. Recall also that 
$l/\ell \to 1$ as $l\to 0$, so for estimates near $0$ it does not matter which version of the Lempert function we are using.

Infinitesimal metrics for points and vectors along the inner normal to the origin
for those models (and for more general pseudoconcave domains) have already been studied in \cite{DNT, FL, Fu1, Fu2} among others.

In what follows, for $\zeta, \eta \in \D$, we denote by $m_{\D} (\zeta, \eta):=\left| \frac{\zeta-\eta}{1-\zeta \bar \eta} \right|$
the pseudohyperbolic distance (or M\"obius function) in the unit disc. Also we use the notation $X_+:= \max(X,0)$, for $X\in \R$.

\begin{prop}
\label{modelest}

For $t \in (0,1)$, let $p_t := (-t,0)$. Let $0< \delta < \eps \le \frac12$, $0 < \mu \le 2$, and 
 $2 \le m \le \infty$.
\begin{multline*}
\ell_{G_\mu}(p_\delta, p_\eps) \asymp \ell_{\tilde G_\mu}(p_\delta, p_\eps) \asymp \frac{(\eps-\delta)}{\eps^{(1-\frac1{2\mu})_+}} 
\ge
\frac{(\eps-\delta)}{\eps^{(1-\frac1{\mu})_+}} \asymp \ell^{(m)}_{G_\mu}(p_\delta, p_\eps) \asymp \ell^{(m)}_{\tilde G_\mu}(p_\delta, p_\eps).
\end{multline*}
\end{prop}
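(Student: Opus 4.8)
The plan is to prove the chain of four asymptotic equivalences together with the single explicit inequality in the middle, by establishing upper bounds for $G_\mu$ and lower bounds for $\tilde G_\mu$ (as the remark before the proposition licenses), and by treating the two exponents $(1-\frac1{2\mu})_+$ (for $m=1$, the plain Lempert function) and $(1-\frac1\mu)_+$ (for $m\ge 2$, including $m=\infty$) separately. The central inequality $\eps^{(1-\frac1{2\mu})_+}\ge \eps^{(1-\frac1\mu)_+}$ for $\eps\le\frac12$ is immediate since $(1-\frac1{2\mu})_+\ge(1-\frac1\mu)_+$ always, so the real content is the two pairs of two-sided estimates. I would organize the proof as four lemmas: (i) an upper bound $\ell_{G_\mu}(p_\delta,p_\eps)\lesssim (\eps-\delta)\eps^{-(1-\frac1{2\mu})_+}$; (ii) a matching lower bound for $\ell_{\tilde G_\mu}$; (iii) an upper bound $\ell^{(m)}_{G_\mu}(p_\delta,p_\eps)\lesssim(\eps-\delta)\eps^{-(1-\frac1\mu)_+}$ valid already for $m=2$; and (iv) a matching lower bound for $\ell^{(\infty)}_{\tilde G_\mu}=k_{\tilde G_\mu}$, which then controls all $\ell^{(m)}$ for $2\le m\le\infty$ by monotonicity in $m$.

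First, for the $\ell$-upper bound (i), I would generalize the competitor disc from Proposition \ref{upbd}: try $\varphi(\zeta)=(-\eps+c\eps^{1-\frac1{2\mu}}\zeta,\ \eps^{\frac1{2\mu}}\zeta)$ (for $\mu\ge\frac12$) or a suitable variant when $\mu<\frac12$ where the exponent saturates at $0$, checking that $\Re\varphi_1<|\varphi_2|^\mu$ on $\D$ for a good constant $c$, and reading off $\alpha$ from $\varphi_1(\alpha)=-\delta$. For the $\ell^{(m)}$-upper bound (iii) I would use a two-disc (or $m$-disc) strategy as in Proposition \ref{upbd}: move radially along the normal using a disc centered at the intermediate excursion point $q_\delta=(-\delta,\,c'(\eps-\delta)\eps^{-\alpha})$ with the sharper exponent $\frac1\mu$ in the second coordinate, so that splitting the path through $q_\delta$ gains the extra factor $\eps^{1/(2\mu)}$ that distinguishes the two exponents. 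The essential point is that allowing an intermediate point lets the disc in the $z_2$-variable scale like $\eps^{1/\mu}$ rather than $\eps^{1/(2\mu)}$.

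For the lower bounds (ii) and (iv) on $\tilde G_\mu$, I would follow the method of Proposition \ref{C1}: given a competing disc $\varphi$ with $\varphi(0)=p_\eps$, $\varphi(\alpha)=p_\delta$, use the Bloch/Schwarz-type growth control $\|\varphi'(0)\|$-free estimate to bound $|\varphi_j(\zeta)|\le C\beta$ on $D(0,\beta)$ for $j\ge2$ (here only the vanishing of $\varphi_j$ at the two endpoints is available for the plain $\ell$, giving the factor that produces $\frac1{2\mu}$, whereas for $k=\ell^{(\infty)}$ one integrates the Kobayashi metric and the relevant scale becomes $\frac1\mu$). Then confine $\varphi_1$ to the region $P_\beta:=\{\Re z<|\Im z|^\mu+C\beta^\mu\}$, choose $\beta_\eps$ by $C\beta_\eps^\mu\asymp\eps$, and postcompose with an explicit conformal map flattening $P_{\beta_\eps}$ onto a half-plane (a fractional power map $\xi\mapsto-(-\xi)^{?}$ adapted to the opening angle of the cusp $\Re z<|\Im z|^\mu$) to obtain, via the contraction of $m_\D$, the desired lower bound on $\alpha$. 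The exponent in this conformal flattening is where the cusp geometry enters and where the $\mu$-dependence of the final exponent is produced.

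The main obstacle I anticipate is the lower bound for the Kobayashi distance $k_{\tilde G_\mu}=\ell^{(\infty)}_{\tilde G_\mu}$ in step (iv), because $k$ is defined by integration of $\kappa$ along curves rather than by a single analytic disc, so the clean one-disc Schwarz-lemma argument of Proposition \ref{C1} does not apply directly. I would handle this either by estimating $\kappa_{\tilde G_\mu}(z;X)$ along a curve joining $p_\delta$ to $p_\eps$ and integrating, using a plurisubharmonic or Sibony-type barrier at the cusp to bound $\kappa$ from below, or by exhibiting a single bounded holomorphic (or plurisubharmonic) function separating $p_\delta$ and $p_\eps$ with controlled oscillation, which bounds $c_{\tilde G_\mu}\le k_{\tilde G_\mu}$ from below and suffices. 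The other routine-but-delicate point is verifying that the conformal flattening map has the correct boundary behavior on the non-convex cusp for every $0<\mu\le2$, including the borderline saturation $\mu\le\frac12$ where $(1-\frac1{2\mu})_+=0$ and the estimate degenerates to the Euclidean $\ell_D\asymp\eps-\delta$ regime.
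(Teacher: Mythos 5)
Your overall architecture is right (upper bounds on $G_\mu$, lower bounds on $\tilde G_\mu$, the two exponents distinguished by the number of zeros of the second coordinate, monotonicity in $m$), but there are two concrete gaps. The first is fatal as written: your competitor disc for the one-disc upper bound, $\varphi(\zeta)=(-\eps+c\eps^{1-\frac1{2\mu}}\zeta,\ \eps^{\frac1{2\mu}}\zeta)$, does not pass through $p_\delta$, since its second coordinate is nonzero at the point $\alpha$ where $\varphi_1(\alpha)=-\delta$. To connect $p_\eps=(-\eps,0)$ to $p_\delta=(-\delta,0)$ with a single disc you must force $\varphi_2$ to vanish at both $0$ and $\alpha$; the paper takes $\varphi_2=B_\alpha(\zeta)=\zeta\frac{\zeta-\alpha}{1-\alpha\zeta}$, and it is precisely the double zero of $B_\alpha$ (so $|\varphi_2|^\mu\sim|\zeta|^{2\mu}$ near $0$) that produces the exponent $\frac1{2\mu}$ for $\ell$ versus $\frac1\mu$ for $\ell^{(2)}$. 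Verifying $\varphi(\D)\subset G_\mu$ for this corrected disc is not routine: the paper needs separate arguments for $\frac12<\mu\le1$ and $\mu>1$, and when $\mu>1$ and $\delta\le(1-\frac1{2\mu})\eps$ the linear first coordinate fails altogether and must be replaced by $\varphi_1(\zeta)=-\delta-a\zeta^{\lceil\mu\rceil}$. Relatedly, in your lower bound you locate the $\mu$-dependence in "the exponent of the conformal flattening adapted to the opening angle of the cusp"; that is the wrong mechanism. In the paper the flattening map is the plain square root $f(\xi)=-(-\xi+\eps)^{1/2}$ for every $\mu$ (the region $P$ is contained in $\C\setminus[\eps,\infty)$, which is all that is needed for a Schwarz--Pick lower bound), and the $\mu$-dependence enters only through the radius $\beta_\eps\asymp\eps^{1/(2\mu)}$ of the subdisc on which the Blaschke factors keep $|\varphi_j|\le2\beta_\eps^2$. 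If you actually used an angle-adapted fractional power on top of that choice of $\beta_\eps$, you would double-count the geometry and get the wrong exponent.

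The second gap is the lower bound for $k_{\tilde G_\mu}=\ell^{(\infty)}_{\tilde G_\mu}$ when $\mu>1$, which you correctly flag as the hard point but leave at the level of two alternatives. The Carath\'eodory route is unlikely to succeed: the projection of $\tilde G_\mu$ to the $z_1$-plane is essentially all of $\D$ (the domain contains points with $\Re z_1>0$), so there is no obvious bounded holomorphic function with oscillation $\gtrsim(\eps-\delta)\eps^{\frac1\mu-1}$ between $p_\delta$ and $p_\eps$. The paper instead integrates Fu's pointwise estimate $\kappa_{\tilde G_\mu}(z;X)\gtrsim|\langle X,\nu_z\rangle|\,\delta_{\tilde G_\mu}(z)^{\frac1\mu-1}+\|X\|$ along a near-extremal curve, after first restricting to a subarc where $\Re\gamma_1$ stays in $[-\eps,-\delta]$ and then splitting $[0,1]$ according to whether the normal component of $\gamma'$ dominates the tangential one (using that the normal $\nu_{\gamma(t)}$ is within $O(\eps^{1-\frac1\mu})$ of $(1,0)$). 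Some argument of this type, with a quantitative control of the direction of the normal along the curve, is needed to close your step (iv); a generic "barrier" assertion does not by itself yield the factor $\eps^{\frac1\mu-1}$ on the normal component.
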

Note that the points $(z_1,z_2)\in \partial G_\mu$
or $\partial \tilde G_\mu$ with $0<|z_2|<1$, $\Im z_1 \neq 0$, are all
$\mathcal C^\infty$-smooth
strictly pseudoconcave points, so that even when the (quasi)
triangle inequality is satisfied for points of the form $p_t$,
which tend to $(0,0)$,
we can find counterexamples to the inequality near any
of those other boundary points, by Theorem \ref{pcnc}.

\begin{proof}
In all cases, the projection to the first coordinate maps $\tilde G_\mu$
to $\D$, so for all $m\in \N^*$, $\ell_{\tilde G_\mu}^{(m)}(p_\delta, p_\eps)
\ge k_{\D}(-\delta, -\eps) 
=m_{\D}(-\delta, -\eps) \asymp \eps-\delta$.
\smallskip

{\it The case when $0 < \mu \le \frac12$.}

Note that in this case all the estimates reduce to $\eps-\delta$.
It will be enough to exhibit a map $\varphi:\D\longrightarrow G_{\frac{1}{2}}$ such that $\varphi(-\eps)=p_\eps$,
$\varphi(-\delta)=p_\delta$. Let $\varphi(\zeta):=\left(\zeta,
\frac{\zeta+\eps }{1+\zeta\eps} \frac{\zeta+\delta}{1+\zeta\delta}, 0, \dots, 0\right)$.
Let $\zeta=x+iy$, $x,y \in \R$.

It is elementary to show that $\left| \frac{\zeta+\eps }{1+\zeta\eps} \right| \ge \left| \frac{x+\eps }{1+x\eps} \right| $. So to
prove that $\varphi$ maps the unit disc to $G_{\frac{1}{2}}$, it is
enough to show that for $0\le x < 1$, we have
\[
x^2 < \frac{x+\eps }{1+x\eps} \frac{x+\delta }{1+x\delta} ,
\]
which reduces to $(1-x^2)(\delta \eps (x^2+1) + (\delta +\eps) x) >0$,
which completes the proof.
\smallskip

{\it Upper bound for $\ell_{G_\mu}$.}

Consider the map $\varphi:\D\longrightarrow \D^n$ given by
\begin{equation}
\label{firstdegmap}
\varphi(\zeta):=\left(-\eps + \eps^{1-\frac1{2\mu}}\zeta, B_\alpha(\zeta), 0, \dots, 0\right),
\end{equation}
with $\alpha:= \frac{(\eps-\delta)}{\eps^{1-\frac1{2\mu}}} $, and $B_\alpha(\zeta):= \zeta \frac{\zeta-\alpha}{1-\alpha\zeta}$,
a finite Blaschke product with zeros at $0$ and $\alpha$. Thus $\varphi(0)= p_\eps$, $\varphi(\alpha)=p_\delta$, so 
if we can prove that $\varphi(\D) \subset G_\mu$, then
$\ell_{G_\mu}(p_\delta, p_\eps) \le \alpha$, and this finishes the proof.

Let $\zeta=x+iy$, $x,y\in\R$. It is elementary to check that $|\varphi_2(x+iy)|\ge |\varphi_2(x)|$, so to check
that $\varphi(\D) \subset G_\mu$, it is enough to prove that for $x\in (-1,+1)$, 
\[
-\eps + \eps^{1-\frac1{2\mu}}x < \left(x  \frac{x-\alpha}{1-\alpha x}\right)^\mu \mbox{ i.e. }
-1 + \frac{x}{\eps^{\frac1{2\mu}}} < \left(  \frac{x}{\eps^{\frac1{2\mu}}} \left(\frac{x}{\eps^{\frac1{2\mu}}} - \frac{\alpha}{\eps^{\frac1{2\mu}}} \right) \right)^\mu \frac1{(1-\alpha x)^\mu}.
\] 
The inequality is obvious for $x\le \eps^{\frac1{2\mu}}$, so we only consider $x> \eps^{\frac1{2\mu}}$.
Then the last factor is larger than $1$, 
so setting $t:= \frac{x}{\eps^{\frac1{2\mu}}} \in (1,\infty) $, 
and noticing that 
$\frac{\alpha}{\eps^{\frac1{2\mu}}} = \frac{\eps-\delta}{\eps}$, 
it is enough to prove that 
$t-1< t^\mu(t-\frac{\eps-\delta}{\eps})^\mu$ for $t>1$.
\vskip.3cm

{\it Case 1: $\frac12 < \mu \le 1$.}

Set as new auxiliary variable $\tau:= (t-1)^{\frac1\mu}$. We need to see that 
\begin{equation}
\label{ineqtau}
\tau\le (1+\tau^\mu)\left(\frac\delta\eps+\tau^\mu\right)=\frac\delta\eps+ \left(\frac\delta\eps+1\right) \tau^\mu+ \tau^{2\mu}. 
\end{equation}
For $\tau\le1$, $ \left(\frac\delta\eps +1\right) \tau^\mu \ge  \tau^\mu \ge \tau$,
and for $\tau\ge 1$, $\tau^{2\mu}\ge \tau$.

\vskip.3cm

{\it Case 2: $1 < \mu $.}

\vskip.1cm

{\it Case 2.1: $(1-\frac1{2\mu}) \eps < \delta < \eps$.}

To obtain \eqref{ineqtau}, it is now enough to prove
\begin{equation}
\label{ineqtau2}
\tau\le (1+\tau^\mu)\left(1-\frac1{2\mu}+\tau^\mu\right), \mbox{ for } \tau >0.
\end{equation}
The inequality is trivially verified for $\tau \le 1-\frac1{2\mu}$ (and for $\tau \ge 1$). The derivative of the righthand side is 
equal to 
\[
2\mu \tau^{\mu-1} \left( 1-\frac1{4\mu} + \tau^\mu\right) \ge
2\mu \left(1-\frac1{2\mu}\right)^{\mu-1} \left( 1-\frac1{4\mu} + \left(1-\frac1{2\mu}\right)^\mu\right) ,
\]
for $\tau \ge 1-\frac1{2\mu}$. Using $\log (1-t) \ge - (2\ln 2)t$ for $0\le t \le \frac12$, 
we find $ \left(1-\frac1{2\mu}\right)^\mu\ge \frac12$, and finally the righthand side of the above equation is
$\ge \frac32 \mu - \frac14 > 1$, so that \eqref{ineqtau2} holds on $(0,\infty)$.


{\it Case 2.2: $0 < \delta \le (1-\frac1{2\mu}) \eps$.}

Here we cannot use the map defined in \eqref{firstdegmap} any longer. The map will depend  $\nu:=\lceil \mu \rceil:=
\min\{m\in \N: m\ge \mu\}$.

Define $\varphi: \D \longrightarrow \D^2$  by
\begin{equation}
\label{seconddegmap}
\varphi(\zeta):= (-\delta-a\zeta^\nu, B_\alpha(\zeta))\mbox{, where }a=a_0 \eps^{1-\frac\nu{2\mu}}, 
\end{equation}
for $a_0$ small enough, to be chosen depending on $\mu$, 
 $\alpha:= \left(\frac{\eps-\delta}a \right)^{1/\nu}$. 
Then for $\eps$ small enough we do have $\varphi(\D)\subset \D^2$,
and $\varphi(0)= p_\delta$, $\varphi(\alpha)= p_\eps$, and 
\[
\left(\frac{\eps}{2\mu a} \right)^{1/\nu} = 
\frac1{(2\mu)^{1/\nu}}  a_0^{-1/\nu} \eps^{1/2\mu}
\le
\alpha \le   a_0^{-1/\nu} \eps^{1/2\mu}.
\]

If we can prove that $\varphi(\D)\subset G_\mu$, then $\ell_{ G_\mu}(p_\delta, p_\eps) \le \alpha$ and we are done.

To see that $\Re \varphi_1(\zeta) < |\varphi_2(\zeta)|^\mu$ for $\zeta \in \D$, 
it is enough to do it when $\Re \zeta^\nu \ge 0$, which implies that 
$|\cos (\arg \zeta)| \le \cos \frac\pi\nu$, 
so $|\Im \zeta| \ge |\zeta| \sin \frac\pi\nu$.
In particular, 
$|\zeta-\alpha|\ge c_1 \max (a_0^{-1/\nu} \eps^{1/2\mu}, |\zeta|)$,
with $c_1$ depending only on $\mu$. 

Then it is enough to show that, for $|\zeta|<1$,
\begin{equation}
\label{rebla}
a_0 \eps^{1-\frac\nu{2\mu}} |\zeta|^\nu 
\le \frac{c_1^\mu}{2^\mu} |\zeta|^\mu \max (a_0^{-1/\nu} \eps^{1/2\mu}, |\zeta|)^\mu.
\end{equation}
When $|\zeta| \ge a_0^{-1/\nu} \eps^{1/2\mu}$, this reduces to
\[
a_0 \eps^{1-\frac\nu{2\mu}} \le \frac{c_1^\mu}{2^\mu} |\zeta|^{2\mu-\nu},
\]
which it is enough to check for $|\zeta| = a_0^{-1/\nu} \eps^{1/2\mu}$
since $2\mu-\nu>0$. For this value of $|\zeta|$, it then reduces to 
$a_0^{2\mu/\nu}\le \frac{c_1^\mu}{2^\mu}$. 

When on the other hand $|\zeta| \le a_0^{-1/\nu} \eps^{1/2\mu}$, 
\eqref{rebla} reduces to 
\[
a_0 \eps^{1-\frac\nu{2\mu}} |\zeta|^{\nu-\mu} \le \frac{c_1^\mu}{2^\mu} 
 a_0^{-\mu/\nu} \eps^{1/2},
\]
which it is enough to check for $|\zeta| = a_0^{-1/\nu} \eps^{1/2\mu}$
since $\nu-\mu\ge 0$. But this leads to the same inequality as above, and 
we are done.
\vskip1cm

\smallskip

{\it Lower bound for $\ell_{\tilde G_\mu}$ when $\frac12 < \mu$.}


Let $\varphi=(\varphi_1, \dots, \varphi_n): \D \longrightarrow G_\mu$ be a holomorphic map such that $\varphi (0)=  p_\eps$,
and there is $\alpha>0$ such that $\varphi(\alpha)= p_\delta$. Write $B_\alpha (\zeta):= \zeta \frac{\zeta-\alpha}{1-\alpha \zeta}$.
For $j\ge 2$, $\varphi_j(\zeta) = h_j(\zeta) B_\alpha (\zeta)$ where $h_j$ is a holomorphic function with $\sup_{\D} |h_j| \le 1$.

Choose  $\beta$ such that $(2\beta^2)^\mu=  \eps$.
We may assume $\alpha <\beta$,
since the conclusion holds otherwise. For $j\ge 2$ and $|\zeta|\le \beta$,
$|\varphi_j(\zeta)| \le \beta \frac{\beta+\alpha}{1+\beta \alpha}
\le 2 \beta^2$. This implies that $\varphi_1 (D(0,\beta))
\subset \{ z \in \C: \Re z < C_\mu (\eps+|\Im z_1|^{\mu})\}=:P$.

As in the proof of Proposition \ref{C1}, define a map $f$ on $\C\setminus  [\eps,\infty) \supset P$
by $f(\xi):=-(-\xi+\eps)^{1/2}$,
where the argument of the square root is taken in $(-\pi,\pi)$.
We then have $f(P)\subset \{\Re z <0\}$,
$f\circ \varphi_1 (0) = f(-\eps)= - (2\eps)^{1/2}$,
$f\circ \varphi_1 (\alpha) = f(-\delta)= - (\eps+\delta)^{1/2}$,
so that
\[
\frac{\alpha}{\beta} \ge
\frac{(2\eps)^{1/2}-(\eps+\delta)^{1/2}}{(2\eps)^{1/2}+(\eps+\delta)^{1/2}}
=
\frac{\eps-\delta}{((2\eps)^{1/2}+(\eps+\delta)^{1/2})^2}
\ge
\frac{\eps-\delta}{8\eps},
\]
so that $\alpha \gtrsim \frac{(\eps-\delta)\beta}{\eps}\asymp(\eps-\delta) \eps^{\frac1{2\mu}-1}$.
\smallskip

{\it Upper bound for $\ell_{G_\mu}^{(2)}$ when $\frac12 < \mu \le 1$.}

Let $q:=(-\delta, \eps-\delta)$.
Considering the disc $\zeta \mapsto (\zeta - \eps, \zeta)$,
which maps to $G_1$ since $-\eps + \Re \zeta < |\zeta|$,
we see that $\ell_{G_\mu}(p_\eps, q)\le \eps-\delta $;
and considering $\zeta \mapsto (-\delta, \zeta)$, that
$\ell_{G_\mu}(p_\delta, q)\le \eps-\delta $.

\smallskip

{\it Upper bound for $\ell_{G_\mu}^{(2)}$ when $1< \mu $.}

To bound
$\ell_{G_\mu}^{(2)}(p_\delta, p_\eps)$ from above,
we need to see that the disc
$\zeta \mapsto (- \eps+ C \eps^{1-\frac1\mu}\zeta , \zeta)$
maps to $G_\mu$ when $C>0$ is small enough.
It is enough to check that $F_\mu(x) := x^\mu - C \eps^{1-\frac1\mu}x+\eps >0$ for $0<x<1$. The function $F_\mu$
attains a minimum for
$x=x_\mu:= \left(\frac{C}{\mu}\right)^{\frac{1}{\mu-1}}\eps^{\frac{1}{\mu}}$,
and $F_\mu(x_\mu)= \eps(1+ C^\frac{\mu}{\mu-1}(\mu^\frac{\mu}{1-\mu}-\mu^\frac{1}{1-\mu}))>0$ for $C$ small enough.

Then we set $q:=(-\delta, \frac{\eps-\delta}{C \eps^{1-\frac1\mu}})$,
and the disc defined above shows that
$\ell_{G_\mu}(p_\eps, q)\le \frac{\eps-\delta}{C \eps^{1-\frac1\mu}} $,
while as before it is plain that
$\ell_{G_\mu}(p_\delta, q)\le \frac{\eps-\delta}{C \eps^{1-\frac1\mu}}$.
\smallskip

{\it Lower bound for $\ell_{\tilde G_\mu}^{(2)}$ when $\frac12 \le \mu \le 1$.}

This is covered by the remark at the outset of the proof that proves that $\ell_{\tilde G_\mu}^{(2)}(p_\delta, p_\eps) \gtrsim \eps-\delta$
in all cases for this range of $\mu$.

\smallskip

{\it Lower bound for $\ell_{\tilde G_\mu}^{(2)}$ when $1< \mu $.}

For the lower bound, 
it is enough
to bound $k_{\tilde G_\mu}(p_\delta, p_\eps)$ from below.  Since we will obtain a quantity of 
the same type as the upper bound, all the $\ell^{(m)}_{\tilde G_\mu}$ must have the same order of magnitude.

To simplify notation, we write this proof for $n=2$.

Using \eqref{kobadef}, we consider a near-extremal curve
$\gamma:[0;1]\longrightarrow \tilde G_\mu$ such that $\gamma(0)=p_\eps$
and $\gamma(1)=p_\delta$.  Denote by $\delta_{\tilde G_\mu}(z):=
\inf\{\|z-w\|, w\in \C^2\setminus \tilde G_\mu\}$, and by $\pi(z)$
a point in $\partial \tilde G_\mu$ such that $\|z-\pi(z)\|=\delta_{\tilde G_\mu}(z)$.
This is a slight abuse of notation, but for $\mu\ge 1$,
 in a neighborhood
of $(0,0)$, where all our points will lie, the complement
of $\tilde G_\mu$ is convex and so $\pi(z)$ is uniquely defined.
Finally for $z$ close enough to $\partial \tilde G_\mu$,
let $\nu_z$ stand for the inner normal vector to $\partial \tilde G_\mu$
at $\pi(z)$.

Since
$k_{\tilde G_\mu}(z,w)\gtrsim \|z-w\| $, observe that the upper bound tells
us that we may assume $\|\gamma(t)-p_\eps\| \lesssim \eps^{\frac1\mu}$
for $t \in [0;1]$. It follows that $\|\gamma(t)\| \lesssim \eps^{\frac1\mu}$,
thus $\delta_{\tilde G_\mu}(\gamma(t))\lesssim \eps^{\frac1\mu}$,
thus $\|\pi(\gamma(t))\|\lesssim \eps^{\frac1\mu}$. Finally
the equation of $\partial \tilde G_\mu$ implies $0\le \Re \pi(\gamma(t))_1
\lesssim \eps$.

Computing the derivative of $|z_2|^\mu + |\Im z_1|^\mu$, we also see
that
\begin{equation}
\label{nu2}
|\Im(\nu_{\gamma(t)})_1| + |(\nu_{\gamma(t)})_2| \lesssim \|\pi(\gamma(t))\|^{\mu-1}
\lesssim  \eps^{1-\frac1\mu}.
\end{equation}
In particular, since
$\nu_{\gamma(t)}$ is a unit vector, for $\eps$ small
enough, $|\Re (\nu_{\gamma(t)})_1|\ge \frac{1}{2}$.

Using the Intermediate Value Theorem,
 we may consider  the restriction of $\gamma$
to some interval $[a;b]\subset [0;1]$ such that
$\Re \gamma(a)_1 =-\eps$, $\Re \gamma(b)_1 =-\delta$,
and $-\eps\le \Re \gamma(t)_1 \le -\delta$, for $a\le t \le b$.
Of course
\[
k_{\tilde G_\mu}(p_\delta, p_\eps) \ge \int_a^b\kappa_{\tilde G_\mu}(\gamma(t);\gamma'(t))dt.
\]
From now on we reparametrize this restriction to be defined on $[0;1]$
and denote it again by $\gamma$.
Since $-\eps \le \Re \gamma(t)_1$, $\delta_{\tilde G_\mu}(\gamma(t))\lesssim \eps$.

Consider the positive
measure $\theta$ on $[0;1]$ defined by $d\theta(t)= |\Re \gamma'(t)_1| dt$,
so that $\theta([0;1])\ge \eps-\delta$,
and the set
\[
A:= \left\{ t\in [0;1]:
\left| \gamma'(t)_2 (\nu_{\gamma(t)})_2 \right| + \left| \Im \gamma'(t)_1 \Im (\nu_{\gamma(t)})_1 \right|
\le \frac{1}{2} \left| \Re \gamma'(t)_1 \Re (\nu_{\gamma(t)})_1 \right|
\right\} .
\]

{\bf Case 1.} Suppose $\theta(A) \ge \frac{1}{2}\theta([0;1])$.

We can deduce from \cite[Proposition 2.3]{Fu2} that
\[
\kappa_{\tilde G_\mu} (z; X) \gtrsim \frac{|\langle X, \nu_z \rangle|}{\delta_{\tilde G_\mu}(z)^{1-\frac1\mu}} + \|X\|.
\]
For $t\in A$,
\[
|\langle \gamma'(t), \nu_{\gamma(t)} \rangle|
\ge \frac{1}{2} \left| \Re \gamma'(t)_1 \Re (\nu_{\gamma(t)})_1 \right|
\ge \frac{1}{4} \left| \Re \gamma'(t)_1 \right| ,\]
so, since $\delta_{\tilde G_\mu}(\gamma(t))^{1-\frac1\mu}\lesssim \eps^{1-\frac1\mu}$,
\[
k_{\tilde G_\mu}(p_\delta, p_\eps) \gtrsim \eps^{\frac1\mu -1} \int_A | \Re \gamma'(t)_1| dt \gtrsim \eps^{\frac1\mu -1} (\eps-\delta).
\]

{\bf Case 2.} Suppose $\theta(A) < \frac{1}{2}\theta([0;1])$.

Then \eqref{nu2} implies that for $t\in [0;1]\setminus A$,
\[
\|\gamma'(t) \|  \eps^{1- \frac1\mu } \gtrsim
\left| \gamma'(t)_2 (\nu_{\gamma(t)})_2 \right| + \left| \Im \gamma'(t)_1 \Im (\nu_{\gamma(t)})_1 \right|
\ge \frac{1}{2} \left|\Re \gamma'(t)_1 \Re (\nu_{\gamma(t)})_1 \right|
\ge \frac{1}{4} \left| \Re \gamma'(t)_1  \right| ,
\]
so
\[
k_{\tilde G_\mu}(p_\delta, p_\eps) \gtrsim
\int_{[0;1]\setminus A} \|\gamma'(t) \| dt
\gtrsim
\eps^{\frac1\mu -1} \int_{[0;1]\setminus A}\left| \Re \gamma'(t)_1  \right| dt
\gtrsim \eps^{\frac1\mu -1}  (\eps-\delta).
\]
\end{proof}

\begin{rem}
\label{avoid}
If $D:= \B^n \setminus L$, where $L$ is closed, $L\cap\B^n\neq\emptyset$ and $H_{2n-2}(L)=0$, with $H_{2n-2}$ being the Hausdorff measure in real
dimension $2n-2$, then it is an immediate consequence of \cite[Theorem 3.4.2]{JP}
that  for any $z,w \in D$, $\ell_D (z,w)=\ell_{\B^2}(z,w)$,
and as a consequence $l_D$ verifies the triangle inequality. However $D$ is not pseudoconvex.
\end{rem}

On the other hand, if $D:= \B^2 \setminus (\C\times \{0\})$, or  $D:=\D \times \D^*$, then $D$ is pseudoconvex
(and we do not know whether a QTI holds there).  

\section{More estimates for the growth of metrics}
\label{more}


\subsection{Estimates for the Sibony metric.}

Recall the definition of the Sibony metric of a domain $D$: for $p\in D$, $X\in \C^n$,
\[
S_D(p,X):=\sup\left\{ \partial\bar\partial u(p)(X, \bar X)^{1/2}
:=\left(\sum_{i,j=1}^n\frac{\partial^2 u}{\partial z_i\partial \bar z_j}(p) X_i\bar X_j\right)^{1/2}: u\in A(p,D)\right\},
\]
where $\D$ denotes a unit disc in $\C$, and $A(p,D)$ is the set of (plurisubharmonic) functions on $D$ such that  
$u(p)=0$, $u$ is $\mathcal C^2$ near $p$, $\log u$ is plurisubharmonic on $D$, and $0\le u\le 1$ on $D$. 

The Finsler metric $S_D$ satisfies the triangle inequality (see e.g. \cite[Lemma 2]{FL}).

\begin{prop}
\label{sib}
Let $\mu>1$ and $G_\mu$ be as in \eqref{defG}.
In general, for $z\in \C^n$, we write $z=(z_1,z') \in \C \times \C^{n-1}$.
Then $S_{G_\mu}(p_\eps; X)
\gtrsim \eps^{\frac1\mu -1} |X_1| + \|X\|$.
\end{prop}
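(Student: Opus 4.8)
The plan is to prove the two lower bounds $S_{G_\mu}(p_\eps;X)\gtrsim\|X\|$ and $S_{G_\mu}(p_\eps;X)\gtrsim\eps^{\frac1\mu-1}|X_1|$ separately, then combine. For the first, I use $G_\mu\subset\D^n$: the inclusion is holomorphic, so the decreasing property of the Sibony metric gives $S_{G_\mu}(p_\eps;X)\ge S_{\D^n}(p_\eps;X)$, and testing with the admissible functions $|z_j|^2$ for $j\ge2$ and $\bigl|\tfrac{z_1+\eps}{1+\eps z_1}\bigr|^2$ yields $S_{\D^n}(p_\eps;X)\gtrsim\max_j|X_j|\asymp\|X\|$, since $p_\eps$ stays near the centre of $\D^n$. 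Because $\mu>1$ forces $\eps^{\frac1\mu-1}\ge1$, one has $\eps^{\frac1\mu-1}|X_1|+\|X\|\asymp\max(\eps^{\frac1\mu-1}|X_1|,\|X\|)$, so it remains only to prove the anisotropic estimate; by slicing I may assume $n=2$.

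For the anisotropic bound I must exhibit a single admissible $u$ with $\partial\bar\partial u(p_\eps)(X,\bar X)\gtrsim\eps^{\frac2\mu-2}|X_1|^2$. Purely holomorphic competitors are useless here: functions $|h|^2$, or sums $\sum_k|h_k|^2$, only reproduce the Carath\'eodory metric, and since $G_\mu$ is pseudoconcave its bounded holomorphic functions extend across $\{\Re z_1=|z_2|^\mu\}$ to essentially $\D^2$, so that metric is merely $O(1)$ at $p_\eps$. I therefore look for $u=|z_1+\eps|^2e^{2\phi}$ with $\phi$ plurisubharmonic: then $\log u=2\log|z_1+\eps|+2\phi$ is psh, $u(p_\eps)=0$, the Hessian in the $X_1$-direction equals $e^{2\phi(p_\eps)}|X_1|^2$ (all other terms carry a factor $z_1+\eps$), and the normalisation $0\le u\le1$ on $G_\mu$ is \emph{exactly} the requirement $\phi\le-\log|z_1+\eps|$.

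Everything thus reduces to constructing a psh $\phi$ on $G_\mu$, smooth near $p_\eps$, with $\phi\le-\log|z_1+\eps|$ and $\phi(p_\eps)\ge(1-\tfrac1\mu)\log\tfrac1\eps-O(1)$. The natural object is the largest psh minorant of the pluri-superharmonic function $-\log|z_1+\eps|$; to lower-bound its value at $p_\eps$ I would exhibit an explicit psh barrier. The mechanism I would exploit is that, although $-\log|z_1+\eps|$ is not psh (its $\partial\bar\partial$ carries a negative line mass along $\{z_1=-\eps\}$), the constraint is loose near the pseudoconcave vertex $(0,0)$, where $-\log|z_1+\eps|\approx\log\tfrac1\eps$, and $p_\eps$ lies only at distance $\eps$ from that vertex; the polydisc bound $|z_2|<1$ enters at the critical scale $|z_2|\sim\eps^{1/\mu}$, where $|z_2|^\mu\sim\eps$. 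Concretely I would build the barrier from the flattening $z_1\mapsto(\eps-z_1)^{1/2}$, holomorphic on $\{\Re z_1<\eps\}$ (a neighbourhood of $p_\eps$, where it is smooth), and glue it by a regularised maximum to a globally defined psh function controlled by $\log|z_2|$ across the set $\{\Re z_1=\eps\}$, which inside $G_\mu$ is contained in $\{|z_2|>\eps^{1/\mu}\}$.

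The main obstacle is precisely this construction of $\phi$: pseudoconcavity forbids extending the good local competitor to all of $G_\mu$, so one is forced to produce a genuinely non-pluriharmonic psh weight and to check \emph{globally} that the glued function remains psh and below $-\log|z_1+\eps|$ while keeping the value $(1-\tfrac1\mu)\log\tfrac1\eps-O(1)$ at $p_\eps$. Balancing the two pieces in the transition region $\{|z_2|\sim\eps^{1/\mu}\}$ and controlling the constants there is where the real work lies; once $\phi$ is in hand the Hessian computation and the combination with the $\|X\|$-bound are routine. Alternatively, one may invoke the Sibony-metric estimates for such model pseudoconcave domains in \cite{FL,Fu2} and specialise them to the pair $(p_\eps,X)$.
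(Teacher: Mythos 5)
Your reduction steps are sound: the bound $S_{G_\mu}(p_\eps;X)\gtrsim\|X\|$ by monotonicity from $G_\mu\subset\D^n$, the observation that for $\mu>1$ the sum is comparable to the max so the two bounds can be proved separately, and the ansatz $u=|z_1+\eps|^2e^{2\phi}$ with the correct Hessian computation and the correct target $\phi(p_\eps)\ge(1-\tfrac1\mu)\log\tfrac1\eps-O(1)$ subject to $\phi\le-\log|z_1+\eps|$. But the proof stops exactly where the proposition actually lives. You write that constructing the global plurisubharmonic weight $\phi$ ``is where the real work lies'' and leave it as a description of desiderata plus a gesture at a gluing; that construction \emph{is} the proof, and without it nothing has been established. (Falling back on ``invoke \cite{FL,Fu2}'' is not a proof of the stated proposition either, though the method is indeed adapted from \cite[Proposition 3]{FL}.)

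For comparison, the paper carries out precisely the gluing you describe, and at the transition scale $\|z'\|\asymp\eps^{1/\mu}$ that you correctly predict: the local competitor is $\log\bigl(f(z_1)+\|z'\|^2\bigr)$ with $f(z_1)=\eps^{2/\mu}\bigl|\tfrac{z_1+\eps}{z_1-\eps}\bigr|^2$ --- a rational function with pole at $z_1=\eps$ rather than your square-root flattening, chosen because on $\{\Re z_1\le\eps/2\}$ (which contains $G_\mu\cap\{\|z'\|\le c_1\eps^{1/\mu}\}$) it is bounded by $c_2\eps^{2/\mu}$ while its $z_1$-Hessian at $-\eps$ is $\tfrac14\eps^{2/\mu-2}$; the global piece is $\log\bigl(L\|z'\|^{2+\alpha}\bigr)$ with $L\asymp\eps^{-\alpha/\mu}$, and the two are matched by a maximum in the collar $\tfrac{c_1}2\eps^{1/\mu}\le\|z'\|\le c_1\eps^{1/\mu}$, then normalized by a constant $L'$. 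Two further points you should address if you complete your route. First, checking that the glued function is globally defined, plurisubharmonic, and $\le 0$ on \emph{all} of $G_\mu$ is a genuine verification, not bookkeeping; it is where the pseudoconcavity is actually paid for. Second, your ``by slicing I may assume $n=2$'' goes the wrong way for the Sibony metric: the inclusion $G_\mu^{(2)}\hookrightarrow G_\mu^{(n)}$ gives $S_{G_\mu^{(n)}}(p_\eps;(1,0,\dots,0))\le S_{G_\mu^{(2)}}(p_\eps;(1,0))$, so a candidate built on the two-dimensional model does not automatically produce one on $G_\mu^{(n)}$; you must construct $\phi$ directly in terms of $z_1$ and $\|z'\|$, as the paper does.
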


\begin{proof}
We follow the method of proof of \cite[Proposition 3]{FL}.
Using the fact that $\{-\eps\}\times \D^{n-1} \subset G_\mu 
\subset \D^n$, it is easy to see that $S_D (p_\eps; (0,X') \asymp \|X'\|$.  So, using the triangle inequality, 
it will be enough to prove $S_{G_\mu}(p_\eps; (1,0,\dots,0)) \gtrsim \eps^{\frac1\mu -1}$ to imply the result.           

For $\zeta\in\C$, let $f(\zeta)= \eps^{2/\mu} \left|\frac{\zeta+\eps}{\zeta-\eps} \right|^2$.
An elementary computation shows that $\frac{\partial^2f}{\partial \zeta \partial \bar \zeta}(-\eps) = \frac14 (\eps^{\frac1\mu -1})^2$. 

Choose $c_1$ a small enough constant so that $z\in G_\mu$ and $\|z'\|\le c_1 \eps^{1/\mu}$
implies $\Re z_1 \le \eps/2$. Let $\alpha>0$. We will choose $L,L'>0$ so that $e^u$ is a candidate for the computation of $S_{G_\mu}(p_\eps; (1,0,\dots,0))$, where
\[
u(z,w):=
\bcases
\max\set{\log\br{f(z_1)+\|z'\|^2}, \log\br{L \|z'\|^{2+\alpha}}}-L',& \|z'\|\le c_1 \eps^{1/\mu}\\
\log\br{L\|z'\|^{2+\alpha}}-L', & \|z'\|\ge \frac{c_1}2 \eps^{1/\mu}.
\ecases
\]
Clearly for $z$ close enough to $p_\eps$ this coincides with $\log\br{f(z-1)+\|z'\|^2}$, 
so $\left( \frac{\partial^2 (e^u)}{\partial z_1 \partial \bar z_1}(p_\eps)\right)^{1/2} \gtrsim \eps^{\frac1\mu -1}$.  We need to see that the function is well defined and plurisubharmonic,
which will be achieved if $\log\br{L\|z'\|^{2+\alpha}}-L'\ge \log\br{f(z_1)+\|z'\|^2}$ in the 
``collar'' region $ \frac{c_1}2 \eps^{1/\mu}\le \|z'\|\le c_1 \eps^{1/\mu}$.

Notice that in this region $\Re z_1 \le \eps/2$, so $f(z_1) \le c_2 \eps^{2/\mu}$,
while $\|z'\|^{2+\alpha}\ge c_3 \eps^{(2+\alpha)/\mu}$. So it is enough to take 
$L\ge c_2 c_3^{-1} \eps^{-\alpha/\mu}$. Finally, taking $L'$ large enough, we can ensure
that $u\le 0$ on $G_\mu$. 
\end{proof}

\subsection{Estimates near a $\mathcal C^{1,\gamma}$-smooth point.}

The following is a consequence of our estimates on model domains. 

\begin{prop}
\label{onegamma}
Let $D \subset \C^n$ be a bounded $\mathcal C^{1,\gamma}$-smooth  domain, for $0<\gamma \le 1$, and
for $p\in \partial D$, let $p_t := p+t n_p$, where $n_p$ is the inner normal vector at $p$. Then for 
$0< \delta < \eps$ small enough, $\ell_D(p_\eps,p_\delta) \gtrsim (\eps-\delta)\eps^{\frac1{2(1+\gamma)}-1}
\asymp \eps^{\frac1{2(1+\gamma)}} - \delta^{\frac1{2(1+\gamma)}}$.
\end{prop}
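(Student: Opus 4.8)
The plan is to reduce the estimate to a local model and then invoke the lower bound of Proposition \ref{modelest} with $\mu=1+\gamma$. Since $\gamma\in(0,1]$ gives $\mu\in(1,2]$, we are exactly in the range where that lower bound carries the nontrivial exponent $(1-\frac1{2\mu})_+=1-\frac1{2(1+\gamma)}$, so the target quantity $(\eps-\delta)\eps^{\frac1{2(1+\gamma)}-1}$ is precisely $\ell_{\tilde G_\mu}(p_\delta,p_\eps)$ up to constants.

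First I would localize exactly as in the proof of Proposition \ref{C1}: after an affine change of coordinates placing $p$ at the origin with inner normal $n_p=(-1,0,\dots,0)$, Lemma \ref{royfin} reduces the lower bound for $\ell_D$ to one for $\ell_{D\cap\D^n}$ (up to the fixed positive factor $\ell_D(D\cap V,D\setminus\D^n)$), and by the contracting property it suffices to bound $\ell_G$ from below for any domain $G$ with $D\cap\D^n\subset G\subset\D^n$. Writing the boundary locally as $\{\Re z_1<\Phi(\Im z_1,z')\}$ with $\nabla\Phi(0,0)=0$, the $\mathcal C^{1,\gamma}$ regularity means $\nabla\Phi$ is $\gamma$-H\"older, so integrating yields $\Phi(\Im z_1,z')\le C(|\Im z_1|+\|z'\|)^{1+\gamma}$. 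Hence I may take $G:=\{z\in\D^n:\Re z_1<C(|\Im z_1|+\|z'\|)^{\mu}\}$, a domain of the same type as $\tilde G_\mu$.

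The core is then to re-run the lower-bound computation of Proposition \ref{modelest} (the case $\frac12<\mu$) for this $G$. Given a competitor $\varphi:\D\to G$ with $\varphi(0)=p_\eps$ and $\varphi(\alpha)=p_\delta$, each $\varphi_j$ ($j\ge2$) vanishes at $0$ and $\alpha$, so $\varphi_j=h_j B_\alpha$ with $\sup_\D|h_j|\le1$. Choosing $\beta\asymp\eps^{1/(2\mu)}$ and assuming $\alpha<\beta$ (otherwise $\alpha\gtrsim\eps^{1/(2\mu)}$ already dominates the claimed bound, since $\eps-\delta\le\eps$), one gets $\|(\varphi_2,\dots,\varphi_n)(\zeta)\|\lesssim\beta^2$ for $|\zeta|\le\beta$, whence, using $(a+b)^\mu\le2^{\mu-1}(a^\mu+b^\mu)$,
\[
\varphi_1(D(0,\beta))\subset P:=\{z\in\C:\Re z<C'(\eps+|\Im z|^{\mu})\}.
\]
Applying the square-root map $f(\xi):=-(C'\eps-\xi)^{1/2}$ (branch cut along $[C'\eps,\infty)$, so that $P\subset\C\setminus[C'\eps,\infty)$ and $f(P)\subset\{\Re<0\}$) and the Schwarz--Pick lemma to $f\circ\varphi_1$ on $D(0,\beta)$ yields $\frac\alpha\beta\gtrsim\frac{\eps-\delta}\eps$, i.e. $\alpha\gtrsim(\eps-\delta)\eps^{\frac1{2\mu}-1}=(\eps-\delta)\eps^{\frac1{2(1+\gamma)}-1}$. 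Taking the infimum over $\varphi$ and combining with the localization factor gives the first estimate.

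Finally, the equivalence $(\eps-\delta)\eps^{s-1}\asymp\eps^s-\delta^s$ with $s=\frac1{2(1+\gamma)}\in(0,1)$ follows by setting $r=\delta/\eps\in(0,1)$ and using concavity of $r\mapsto r^s$, which gives $s(1-r)\le1-r^s\le1-r$, so the ratio $\frac{1-r^s}{1-r}$ stays in $[s,1]$. I expect the main obstacle to be the second and third paragraphs together: correctly extracting from $\mathcal C^{1,\gamma}$ smoothness the model inclusion into a $\tilde G_{1+\gamma}$-type domain, and then verifying that the conformal/Schwarz--Pick estimate of Proposition \ref{modelest} survives the extra constant $C'$ in front of the defining inequality. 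It does survive, because that constant only affects the shift in $f$ and the width of $P$, and is absorbed into the implied constants of $\gtrsim$ and $\asymp$ (which are therefore allowed to depend on $\gamma$, $D$, and $p$).
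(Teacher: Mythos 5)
Your proposal is correct and follows essentially the same route as the paper: localize via Lemma \ref{royfin} as in Proposition \ref{C1}, use the $\mathcal C^{1,\gamma}$ regularity to bound the graphing function by $C(|\Im z_1|+\|z'\|)^{1+\gamma}$, and then invoke the lower bound of Proposition \ref{modelest} for a $\tilde G_{1+\gamma}$-type model. You are in fact slightly more explicit than the paper in checking that the extra constant $C'$ in the defining inequality is harmlessly absorbed into the square-root map and the implied constants, and in justifying the final equivalence $(\eps-\delta)\eps^{s-1}\asymp\eps^s-\delta^s$.
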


Note that $t=|p_t-p|$ and that, even though $p$ need not be the closest point to $p_t$, $\lim_{t\to0, t>0} \frac{|p_t-p|}{\delta_D(p_t)} =1$.

\begin{proof}
Let $\mu:=1+\gamma$.
We can follow the beginning of the proof of Proposition \ref{C1}, this time using $\Phi (\Im z_1, z') \le C(|\Im z_1|^\mu+|z'|^\mu)$,
which follows from $\nabla \Phi (0)=0$ and the regularity hypothesis. So we can reduce ourselves to studying a lower bound for 
$l_{\tilde G_\mu}$, and Proposition \ref{modelest} gives the answer.
\end{proof}

Conversely, in a pseudoconcave situation with the same smoothness we can give an upper estimates for $\ell^{(2)}_D(p_\eps,p_\delta)$
and $ \kappa^{(2)}_D (p_\eps;n_p)$. 

\begin{prop}
\label{onegammatwo}
Let $D= \{\rho<0\} \subset \C^n$  be a bounded $\mathcal C^{m,\gamma}$-smooth  domain, for $0\le \gamma <1$, $m\in \N^*$, and
for $p\in \partial D$, let $p_t := p+t n_p$, where $n_p$ is the inner normal vector at $p$. 
Assume that there is a nonzero vector $v \in T_p^{\C}\partial D$ (the complex tangent space to $\partial D$ at $p$) and $c>0$
such that $\rho (p + \lambda v) \le - c |\lambda|^{m+\gamma}$.

Then 
\begin{enumerate}
\item
for $0< \delta < \eps$ small enough, $\ell_D(p_\eps,p_\delta) \lesssim (\eps-\delta)\eps^{\frac1{(m+\gamma)}-1}
\asymp \eps^{\frac1{(m+\gamma)}} - \delta^{\frac1{(m+\gamma)}}$;
\item
for $0< \eps$ small enough, $ \kappa^{(2)}_D (p_\eps;n_p) \lesssim \eps^{\frac1{(m+\gamma)}-1}$.
\end{enumerate}
\end{prop}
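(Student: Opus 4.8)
The plan is to transplant the two model constructions of Proposition~\ref{upbd} and of the upper bounds for $\ell^{(2)}_{G_\mu}$ and $\kappa^{(2)}_{G_\mu}$ in Proposition~\ref{modelest} into the genuine domain $D$; the only new difficulty is that $\partial D$ is not an exact copy of $G_\mu$ but is controlled only along the single complex line through $p$ in the direction $v$. First I would normalize coordinates so that $p=0$, the inner normal is $n_p=-e_1$ (hence $p_t=(-t,0,\dots,0)$), and $v=e_2$. Writing the boundary near $0$ as a graph $\Re z_1=\Psi(\Im z_1,z')$ with $\nabla\Psi(0)=0$, the hypothesis $\rho(0,\lambda,0,\dots,0)\le-c|\lambda|^\mu$, with $\mu:=m+\gamma$, translates (using that the normal derivative of $\rho$ is comparable to $1$) into $\Psi(0,z_2,0,\dots,0)\ge c'|z_2|^\mu$ for a fixed $c'>0$. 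Setting $\omega:=\sup_U|\nabla\Psi|$, which tends to $0$ as the neighborhood $U$ shrinks, I then obtain the one-sided bound $\Psi(\Im z_1,z_2,0,\dots,0)\ge c'|z_2|^\mu-\omega|\Im z_1|$, and this is the substitute for the inclusion $G_\mu\subset D$ used in the model.

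For the estimate in (1) I would join $p_\eps$ and $p_\delta$ through the intermediate point $q:=(-\delta,r,0,\dots,0)$, where $r:=\frac{\eps-\delta}{C\eps^{1-1/\mu}}\asymp(\eps-\delta)\eps^{1/\mu-1}$, exactly as in the bound for $\ell^{(2)}_{G_\mu}$ when $\mu>1$. The disc $\psi(\zeta):=(-\delta,R\zeta,0,\dots,0)$, with $R$ a fixed small constant, joins $p_\delta$ to $q$ inside $D$ (here $\Re\psi_1=-\delta<0\le\Psi$ trivially) at cost $r/R$, and the tilted disc $\phi(\zeta):=(-\eps+C\eps^{1-1/\mu}\zeta,\zeta,0,\dots,0)$ joins $p_\eps$ to $q$ at cost $r$. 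The containment $\phi(\D)\subset D$ is where the one-sided bound is used: inserting $\Re\phi_1$ and $\Psi(\Im\phi_1,\phi_2,0,\dots,0)$ and bounding $\Re\zeta,|\Im\zeta|\le|\zeta|$ reduces the requirement to $c'|\zeta|^\mu-C(1+\omega)\eps^{1-1/\mu}|\zeta|+\eps>0$, which is the model inequality $F_\mu>0$ of Proposition~\ref{modelest} and holds once $C$ is small (depending only on $\mu$ and $c'$). Summing the two costs bounds $\ell_D(p_\eps,q)+\ell_D(q,p_\delta)$, hence $\ell^{(2)}_D(p_\eps,p_\delta)$, by $\lesssim(\eps-\delta)\eps^{1/\mu-1}$; this is the content of (1), realized, just as in Proposition~\ref{upbd}, by a chain of two discs meeting at $q$.

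For (2) I would mirror the vector splitting of Proposition~\ref{upbd}: write $n_p=X_1+X_2$ with $X_1:=\lambda\,\phi'(0)$, where $\phi'(0)=(C\eps^{1-1/\mu},1,0,\dots,0)$ and $\lambda:=-\tfrac1C\eps^{1/\mu-1}$ is chosen so that the first component of $X_1$ equals that of $n_p$; then $\kappa_D(p_\eps;X_1)\le|\lambda|\lesssim\eps^{1/\mu-1}$, using the very disc $\phi$ just shown to lie in $D$. The remainder $X_2=n_p-X_1$ points in the $z_2$-direction and is realized by the vertical disc $\zeta\mapsto(-\eps,R\zeta,0,\dots,0)\subset D$, giving $\kappa_D(p_\eps;X_2)\lesssim\eps^{1/\mu-1}$. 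Adding the two estimates yields $\kappa^{(2)}_D(p_\eps;n_p)\lesssim\eps^{1/\mu-1}$.

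The main obstacle is the transplantation step for the tilted disc $\phi$: the hypothesis pins $\rho$ only on the line $\{(0,\lambda,0,\dots,0)\}$, whereas $\phi$ has a nonzero $\Re z_1$-excursion of size $\asymp\eps^{1-1/\mu}$ together with a comparable $\Im z_1$-component, so I must ensure these do not push $\phi$ out of $D$. The point is that $\nabla\Psi(0)=0$ forces the deviation of $\partial D$ from the model to be of strictly lower order than the gap $c'|z_2|^\mu$ the construction exploits: the loss $\omega|\Im\phi_1|$ is absorbable into the constant $C$ because $\omega\to0$ as $U$ shrinks while $C$ may be taken as small as needed. Once this is secured, both parts follow from the routine model computations already carried out in Propositions~\ref{upbd} and \ref{modelest}.
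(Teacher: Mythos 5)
Your proof is correct and takes essentially the same route as the paper's: your one-sided graph bound $\Psi(\Im z_1,z_2,0,\dots,0)\ge c'|z_2|^\mu-\omega|\Im z_1|$ is precisely the paper's inclusion $G^-_\mu\times\{0\}\subset D$ (obtained there by an affine normalization), and your tilted disc, vertical disc, intermediate point $q$, and vector splitting for $\kappa^{(2)}$ reproduce, up to constants, the paper's reduction to Lemma~\ref{modelminus}. Two harmless remarks: like the paper's own argument, your two-disc chain bounds $\ell^{(2)}_D$ (which is what is announced before the proposition; the plain $\ell_D$ in item (1) of the statement appears to be a typo), and your disc $\phi$ sweeps the full unit disc in the $z_2$-variable, so it should be rescaled by the fixed factor $R$ (or the coordinates normalized so that $\overline{\D^2}$ lies in $U$, as the paper's affine change arranges), while at $\mu=m+\gamma=1$ the quoted minimum computation for $F_\mu$ degenerates and one instead uses the trivial fact that $c'|\zeta|-C(1+\omega)|\zeta|+\eps>0$ once $C(1+\omega)\le c'$.
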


\begin{proof}
Let $\mu:=m+\gamma$. After an affine change of variables, we may assume that $0\in \partial D$ and that $G^-_\mu \times \{0\} \subset D$,
where $G^-_\mu := \{z\in \D^2: \Re z_1 < -| \Im z_1| +  |z_2|^\mu \}$, with $p_t=(-t,0)$, $0<t<1$.
By inclusion, the proof will reduce to the next Lemma.
\end{proof}

\begin{lem}
\label{modelminus}
For $\mu\ge 1$ and $\eps, \delta$ small enough, (1) $\ell_{G^-_\mu}^{(2)}(p_\eps,p_\delta) \lesssim (\eps-\delta)\eps^{\frac1{\mu}-1}$,
and (2) for $0< \eps$ small enough, $ \kappa^{(2)}_{G^-_\mu} (p_\eps;n_p) \lesssim \eps^{\frac1{\mu}-1}$.
\end{lem}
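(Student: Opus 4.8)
The plan is to reuse, almost verbatim, the two disc constructions from the ``Upper bound for $\ell_{G_\mu}^{(2)}$ when $1<\mu$'' portion of the proof of Proposition \ref{modelest} and from the $\kappa^{(2)}$ computation of Proposition \ref{upbd}. The one genuinely new feature is that $G^-_\mu$ is \emph{strictly smaller} than $G_\mu$ because of the extra $-|\Im z_1|$ term, so $G^-_\mu \subset G_\mu$ goes the wrong way for an upper bound and one cannot invoke monotonicity; instead the candidate discs must be checked to land inside $G^-_\mu$ directly. For statement (1), fix a small $C>0$ and consider the affine disc $\varphi(\zeta):=(-\eps+C\eps^{1-\frac1\mu}\zeta,\zeta)$. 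Writing $\zeta=x+iy$, the image lies in $G^-_\mu$ exactly when $\Re\varphi_1(\zeta)+|\Im\varphi_1(\zeta)|<|\zeta|^\mu$, i.e. $-\eps+C\eps^{1-\frac1\mu}(x+|y|)<(x^2+y^2)^{\mu/2}$. Using the elementary bound $x+|y|\le\sqrt2\,|\zeta|$, this is implied by $|\zeta|^\mu-\sqrt2\,C\eps^{1-\frac1\mu}|\zeta|+\eps>0$, which is precisely the inequality $F_\mu>0$ analyzed in Proposition \ref{modelest} with $C$ replaced by $\sqrt2\,C$; it holds on $[0,1)$ once $C$ is small, and for $\eps$ small the image also stays in $\D^2$.

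With $\varphi$ in hand, I would take the intermediate point $q:=\left(-\delta,\frac{\eps-\delta}{C\eps^{1-\frac1\mu}}\right)$. Since $\varphi(0)=p_\eps$ and $\varphi\!\left(\tfrac{\eps-\delta}{C\eps^{1-1/\mu}}\right)=q$, we get $\ell_{G^-_\mu}(p_\eps,q)\le\frac{\eps-\delta}{C\eps^{1-\frac1\mu}}$, while the vertical disc $\zeta\mapsto(-\delta,\zeta)$, which maps into $G^-_\mu$ because $-\delta<0\le|\zeta|^\mu$, gives $\ell_{G^-_\mu}(p_\delta,q)\le\frac{\eps-\delta}{C\eps^{1-\frac1\mu}}$. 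Adding these and using $\eps^{-(1-1/\mu)}=\eps^{\frac1\mu-1}$ yields (1). For the infinitesimal statement (2), I would decompose the inner normal as $n_p=(1,0)=\left(1,\frac1{C\eps^{1-\frac1\mu}}\right)+\left(0,-\frac1{C\eps^{1-\frac1\mu}}\right)$. From $\varphi'(0)=\left(C\eps^{1-\frac1\mu},1\right)$ and homogeneity, $\kappa_{G^-_\mu}\!\left(p_\eps;\left(1,\frac1{C\eps^{1-1/\mu}}\right)\right)\le\frac1{C\eps^{1-\frac1\mu}}$; likewise the vertical disc $\zeta\mapsto(-\eps,\zeta)\subset G^-_\mu$ gives $\kappa_{G^-_\mu}(p_\eps;(0,1))\le1$, hence $\kappa_{G^-_\mu}\!\left(p_\eps;\left(0,-\frac1{C\eps^{1-1/\mu}}\right)\right)\le\frac1{C\eps^{1-\frac1\mu}}$. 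Summing the two contributions bounds $\kappa^{(2)}_{G^-_\mu}(p_\eps;n_p)$ by $\frac{2}{C}\eps^{\frac1\mu-1}$, which is (2).

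There is no serious obstacle here: the argument is a direct transcription of the earlier model estimates. The only point requiring care is the one flagged above, namely that the shrinking caused by the $-|\Im z_1|$ term forbids an inclusion argument, so one must verify membership in $G^-_\mu$ by hand; this is exactly where the inequality $x+|y|\le\sqrt2\,|\zeta|$ enters, the harmless factor $\sqrt2$ being absorbed into the constant $C$. I would also remark that the same computation covers $\mu=1$ (where $\eps^{1-1/\mu}=1$), so the whole range $\mu\ge1$ of the Lemma is handled uniformly.
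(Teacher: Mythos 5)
Your argument is correct and is essentially the paper's own proof: the authors use the same affine disc $\zeta\mapsto(-\eps+c\,\eps^{1-\frac1\mu}\zeta,\zeta)$ (with the explicit constant $c=1/\sqrt2$), the same intermediate point $\bigl(-\delta,\tfrac{\eps-\delta}{c\,\eps^{1-1/\mu}}\bigr)$ with a vertical disc, and the same decomposition $(1,0)=(1,\lambda)-\lambda(0,1)$ for $\kappa^{(2)}$, verifying membership in $G^-_\mu$ via $|x|+|y|\le\sqrt2\,|\zeta|$ exactly as you do. The only cosmetic caveat is that for $\mu=1$ the minimization formula for $F_\mu$ from Proposition \ref{modelest} does not literally apply, but there $F_1(x)=(1-C)x+\eps>0$ is immediate, so the whole range $\mu\ge1$ is indeed covered.
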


\begin{proof}
The proof of (1) is analogous to that of Proposition \ref{upbd}. 

Let $q_\delta:= \left(-\delta, \frac{\eps-\delta}{\eps^{1-1/\mu} \sqrt2}\right)$. 
Clearly $\ell_{G^-_\mu}(q_\delta,p_\delta)\le \frac{\eps-\delta}{\eps^{1-1/\mu} \sqrt2}$.

Let $\varphi(\zeta):= (-\eps+\eps^{1-1/\mu}\frac\zeta{\sqrt 2}, \zeta)$. For small $\eps$, $\varphi(\D)\subset \D^2$. 
On the other hand, $\varphi(0)= p_\eps$, $\varphi\left(\frac{\eps-\delta}{\eps^{1-1/\mu} \sqrt2}\right) = q_\delta$. It remains
to see that $\varphi(\D)\subset G^-_\mu$ by checking the inequation.  Let $\zeta:= x+iy$. It is enough to see that 
for any $x,y$,
\[
\eps^{1-1/\mu} (|x|+|y|) 2^{-1/2} \le \eps + (|x|+|y|)^\mu 2^{-\mu/2} \le \eps + (x^2+y^2)^{\mu/2}.
\]
The second inequality is elementary.  For the first one, let $t=|x|+|y|$ and notice that for $0\le t \le \eps^{1/\mu} \sqrt2$, $\eps^{1-1/\mu} t^\mu 2^{-1/2}\le \eps$; while for $\eps^{1/\mu} \sqrt2\le t$, $t^{\mu-1} 2^{-\mu/2} \ge \eps^{1-1/\mu} 2^{-1/2}$.

To prove (2), write $(1,0)= (1, \eps^{\frac1\mu-1} ) - \eps^{\frac1\mu-1} (0,1)$. Then 
$ \eps^{1-1/\mu} \sqrt2 \varphi'(0) = (1, \eps^{\frac1\mu-1} )$
so $ \kappa_{G^-_\mu} (p_\eps; (1, \eps^{\frac1\mu-1} ) ) \le\frac{\eps^{\frac1{\mu}-1}}{\sqrt 2}$; and since
clearly $\kappa_{G^-_\mu} (p_\eps; (0,1) ) =1$, the result follows.
\end{proof}

\section{Appendix}
\label{appen}

\begin{proof*}{\it Proof of Proposition \ref{cont}.}
We give the proof for $l_D$, the same arguments
work for $\ell_D$, {\it mutatis mutandis.}

(a) Note that $l_D$ is an upper semicontinuous function
(see e.g. \cite[Proposition 3.1.14]{JP}). Since $l_D^{(m)}$ is an infimum of such functions, it is
upper semicontinuous too. The uniformity easily follows from the fact that $(l_D^{(m)})_{m\in\N}$
is a pointwise decreasing sequence which is bounded from below.
\smallskip

(b) For $m=1$ the results follows by tautness of $D$ (see e.g. \cite[Propositions 3.2.7 and 3.2.9]{JP}).
Further, we will consider only the case $m=2$ to simplify notations. The same proof works for $m\ge 3$
with more intermediate points.

Let $a,b\in D.$ We may take take a sequence of points $c_k\in D$ such that
$l_D(a,c_k)+l_D(c_k,b)\to l_D^{(2)}(a,b).$ Since $D$ is taut, we have the the following.
\smallskip

{\bf Claim.} Given $L>0$ and $U\Subset D$, there exists $K\Subset D$ such that
for any $a'\in U$ and any $c\in D$ for which $l_D(a',c)\le L$, one has that $c\in K.$
\smallskip

Note that the analogous fact, but for a single point $a$ instead of
a relatively compact set $U$, follows from \cite[Proposition 3.2.1]{JP}.
The same proof with obvious modifications implies the claim.

It follows that, passing to a subsequence if necessary, we may assume that $c_k\to c\in D.$
Then $l_D(a,c)+l_D(c,b)=l_D^{(2)}(a,b)$ by continuity of $l_D.$ Since $l_D$
admits extremal discs, so does $l_D^{(2)}$.

Suppose now that $l_D^{(2)}$ is not continuous at $(a,b)\in D\times D.$ Since it is upper semicontinuous,
we can find $\delta>0$ and sequences $a_k \to a$ and $b_k \to b$ such that
$\lim_{k\to \infty}l_D^{(2)}(a_k,b_k) \le l_D^{(2)}(a,b) -3\delta$. Then for each $k$ large enough
we may choose a point $c_k\in D$ such that
$$l_D(a_k,c_k) +  l_D(c_k,b_k) \le  l_D^{(2)}(a_k,b_k) + \delta \le  l_D^{(2)}(a,b)- \delta.$$
By the claim above, we may assume that $c_k\to c\in D$. Since $l_D$ is continuous, we get the
contradiction $l_D^{(2)}(a,b) \le l_D(a,c)+l_D(c,b) \le l_D^{(2)}(a,b) -\delta.$

\smallskip
An analogous proof can be written for $\kappa^{(m)}_D$.
\end{proof*}

\smallskip

\noindent{\bf Acknowledgement.} The authors would like to express their heartfelt thanks to Peter Pflug,
whose cogent remarks much improved the manuscript.

{}

\end{document}